\documentclass[12pt]{article}

 \usepackage[margin=1.1in]{geometry}
  \usepackage{amsmath,amssymb,amsfonts,graphicx,amsthm,nicefrac,mathtools,bm}
\usepackage{courier}
\usepackage[round]{natbib}
\usepackage{textcomp}

\usepackage{sectsty}
\usepackage{hyperref}
\hypersetup{colorlinks,
linkcolor=black,
citecolor=blue,
urlcolor=black
}
\usepackage{bbm}
\usepackage{float}
\usepackage{subcaption}
\usepackage{enumitem}
\usepackage[boxruled,linesnumbered]{algorithm2e}


\newtheorem{theorem}{Theorem}
\newtheorem{lemma}{Lemma}
\newtheorem{proposition}{Proposition}
\newtheorem{corollary}{Corollary}
\theoremstyle{remark}

\theoremstyle{definition}
\newtheorem{definition}{Definition}

\renewcommand{\vec}[1]{\boldsymbol{{#1}}}
\DeclareMathOperator*{\argmax}{arg\,max}
\DeclareMathOperator*{\argmin}{arg\,min}

\newcommand{\PP}{\mathbb{P}}
\newcommand{\EE}{\mathbb{E}}
\newcommand{\RR}{\mathbb{R}}

\newcommand{\bZ}{\vec{Z}}

\newcommand{\bY}{\vec{Y}}
\newcommand{\by}{\vec{y}}
\newcommand{\bX}{\vec{X}}
\newcommand{\bx}{\vec{x}}
\newcommand{\bt}{\vec{t}}

\newcommand{\btheta}{\boldsymbol{\theta}}

\newcommand{\bTheta}{\boldsymbol{\Theta}}

\newcommand{\calN}{\mathcal{N}}
\newcommand{\calA}{\mathcal{A}}

\newcommand{\calY}{\mathcal{Y}}

\newcommand{\ba}{\vec{a}}
\newcommand{\calD}{\mathcal{D}}

\newcommand{\thetatil}{\tilde{\theta}}
\newcommand{\bthetatil}{\tilde{\btheta}}
\newcommand{\bthetatilprime}{\tilde{\btheta}'}

\newcommand{\Ytil}{\tilde{Y}}
\newcommand{\bYtil}{\tilde{\bY}}
\newcommand{\bYtilprime}{\tilde{\boldsymbol{Y}}'}
\newcommand{\bytil}{\tilde{\by}}
\newcommand{\deltabayes}{\tilde{\delta}^*}

\newcommand{\bzero}{\boldsymbol{0}}
\newcommand{\htil}{\tilde{h}}

\newcommand{\bYnotj}{\boldsymbol{Y}_{-j}}
\newcommand{\gjzerostar}{g_{j0}^*}
\newcommand{\gjzerostartilde}{\tilde{g}_{j0}^*}
\newcommand{\gjonestartilde}{\tilde{g}_{j1}^*}
\newcommand{\bfone}{\boldsymbol{1}}

\DeclareMathOperator*{\minimize}{minimize}

\title{On the Minimum Attainable Risk in Permutation Invariant Problems}

\author{
Asaf Weinstein\\ [1ex]
{\it \normalsize Department of Statistics, The Hebrew University of Jerusalem}\\
}

\date{}


\providecommand{\keywords}[1] 
{ 
\small 
\textit{Keywords: } #1 
}

\begin{document}
\maketitle	

	\begin{abstract}

We introduce a broad class of permutation invariant problems by extending the standard decision theoretic definition to allow also {\em selective inference} tasks, where the target is specified only after seeing the data. For any such problem, the minimizer of the risk at $\btheta$ among all permutation invariant (equivariant) procedures is shown to be the Bayes rule that posits a uniform prior over all permutations of $\btheta$. This gives an explicit form of the greatest lower bound on the risk of any sensible procedure in a wide range of problems. 
From a practical perspective, approximations to the exact bound are required because of its computational cost. 
In a specific example of estimating the parameter of a selected population, we prove that our bound coincides asymptotically with the computationally tractable bound attained by the Bayes rule which replaces the 
uniform prior on all permutations of $\btheta$ by the i.i.d.~prior with the same marginals. This generalizes results previously known only for the very special case of compound decision problems. The possibility of asymptotically attaining the latter bound by an empirical Bayes rule is discussed.

	\end{abstract}
	
\keywords{Compound decision, Permutation invariance, Exchangeability, Oracle rules, Empirical Bayes, Selective inference}

\section{Introduction}\label{sec:intro}

Consider a general statistical problem in the standard decision theoretic framework, entailing a parametric model $Y\sim f(\cdot;\theta)$, the fixed unknown parameter $\theta\in \Theta$ indexing a distribution on some sample space $\calY$, and a loss function $L(\theta, a)$ associating a nonnegative penalty with each possible value of the parameter and every action $a\in \calA$. 
The statistical problem is  to choose a decision rule $\delta:\calY\to \calA$ such that the risk $R(\theta, \delta) = \EE_{\theta}L(\theta, \delta(Y))$ is small; because $\theta$ is unknown, ideally we want $\delta$ to make the risk small for all  $\theta\in \Theta$. 
There is, however, an inherent difficulty in the frequentist paradigm, as the minimizer
\begin{equation}\label{eq:oracle-triv}
\delta^* = \argmin_{\delta}R(\theta,\delta)
\end{equation}
will generally depend on $\theta$, i.e., there is no single rule $\delta$ that is optimal for all values $\theta\in \Theta$ at the same time. 
The usual way to circumvent this difficulty is to resort to minimaxity or Bayes criteria, but minimizing worst-case or average risk (with respect to some pre-chosen weight function on $\Theta$) promises little about performance at the true value $\theta$, which should ultimately be the only concern. 

Still, for some {\em special} classes of statistical problems, solutions with much stronger (asymptotic) guarantees compared to minimaxity or Bayes in fact exist. 
This is the case in {compound decision} problems, a class of statistical problems defined and studied by Robbins starting in the early 1950s. 



\subsection{Compound decision problems}\label{sec:cd}
A compound decision problem consists of $n$ independent sub-problems of the same formal structure, to be solved simultaneously under the average componentwise loss. 
Formally, the statistician observes independent variables taking values in some sample space $\calY$, 
\begin{equation}
\label{eq:model-cd}
Y_i\stackrel{ind}{\sim} f(\cdot; \theta_i), \qquad i=1,...,n, 
\end{equation}
and must make a decision of some kind $a_i\in \calA$ regarding each of the $n$ unknown parameters $\theta_i\in \Theta$. 
Denoting $\bY = (Y_1,...,Y_n), \btheta = (\theta_1,..., \theta_n)$, the loss for action $\ba = (a_1,...,a_n)$ is 
\begin{equation}\label{eq:loss-sum}
L(\btheta, \ba) = \frac{1}{n}\sum_{i=1}^n\bar{L}(\theta_i,a_i),
\end{equation}
where $\bar{L}$ is some coordinatewise loss function. 
For any decision rule $\delta:\calY^n\to \calA^n$ the risk is defined 
$$
R(\btheta, \delta) = 
\EE_{\btheta} L(\btheta, \delta(\bY)) = 
\frac{1}{n} \EE_{\btheta}  \sum_{i=1}^n \bar{L}(\theta_i,\delta_i(\bY)). 
$$
For example, Stein's normal means problem, to estimate $\theta_1,...,\theta_n$ from independent observations $Y_i\sim \calN(\theta_i,1)$ under squared error is, up to the factor $n^{-1}$ in the loss function, a compound decision problem. 
Robbins noticed that the special structure of a compound decision problem presents an opportunity to design solutions which, in a sense, exhibit optimality at the {\em true} value $\btheta$, sometimes referred to as {\em instance}-optimality. 
He argued as follows. 
Due to the separable form of both the model and the loss, it is reasonable to restrict attention to the class $\calD^S$ of {\em simple} rules, $\delta_i(\bY) = t(Y_i)$ for some $t:\calY\to \calA$. 
Robbins first set out to find the {\it oracle simple rule}, 
\begin{equation}\label{eq:oracle-simple-cd}
\delta_S^* := \argmin_{\delta\in \calD^S}R(\btheta, \delta). 
\end{equation}
Note that, while \eqref{eq:oracle-triv} is often trivial, the oracle simple rule is nontrivial because $\delta$ is constrained. 
For example, in Stein's problem $\delta^*(\bY) \equiv \btheta$ and the corresponding risk is zero, but this is not a simple rule (unless all $\theta_i$ are equal). 
Restricting attention to simple rules, the main insight Robbins provided \citep{robbins1951asymptotically} was relating the original frequentist problem to a {\em postulated} Bayesian problem,   
\begin{equation}\label{eq:model-bayes-simple}
\thetatil_i\stackrel{iid}{\sim} G^*\quad \quad \quad \Ytil_i \lvert \thetatil_i \ \stackrel{ind}{\sim} \ f(\cdot; \thetatil_i), 
\end{equation}
where $G^* = G^*_n$ is the empirical distribution of the (fixed) values $\theta_1,...,\theta_n$, and $f$ is the likelihood from the original model \eqref{eq:model-cd}. 
Indeed, let $\bthetatil = (\thetatil_1,...,\thetatil_n), \bYtil = (\Ytil_1,...,\Ytil_n)$, and take any simple rule $\delta$. 
Then, using the towering property of the  expectation, its Bayes risk under \eqref{eq:model-bayes-simple} is 
\begin{equation}
\label{eq:fundamental-theorem-cd-pf}
\begin{aligned}
\EE L(\bthetatil, \delta(\bYtil)) &= 
\frac{1}{n} \EE \sum_{i=1}^n  \bar{L}(\thetatil_i, \delta_i(\bYtil)) =  
\EE \bar{L}(\thetatil_i, t(\Ytil_i)) = \EE \EE [\bar{L}(\thetatil_i, t(\Ytil_i)) \lvert \thetatil_i] \\
&= 
\frac{1}{n}\sum_{i=1}^n \EE_{\theta_i}\bar{L}(\theta_i, t(Y_i))
=\frac{1}{n} \EE_{\btheta} \sum_{i=1}^n \bar{L}(\theta_i, t(Y_i)) = \EE_{\btheta}L(\btheta, \delta(\bY)), 
\end{aligned}
\end{equation}
i.e., the Bayes risk of a simple rule $\delta$ is precisely the point risk $R(\btheta, \delta)$ in the original compound problem. 
It follows immediately that the oracle simple rule\footnote{It should be noted that the bound \eqref{eq:bound-cd-simple} applies even without independence in \eqref{eq:model-cd}, because the calculation in \eqref{eq:fundamental-theorem-cd-pf} involves only the marginals of the variables $Y_i$. 
In any case, the original definition of a compound decision problem includes the independence assumption in \eqref{eq:model-cd}. 
} in the original problem is the Bayes rule under \eqref{eq:model-bayes-simple},  $\delta^*_S = (t_{G^*}(Y_1),...,t_{G^*}(Y_n))$ where 
$$
t_{G^*}(y) = \argmin_{a\in \calA}\EE[\bar{L}(\thetatil_1, a) |\Ytil_1=y]. 
$$
By definition, the risk of $\delta^*_S$ is the {greatest lower bound} on the risk of any simple rule, 
\begin{equation}
\label{eq:bound-cd-simple}
R(\btheta, \delta^*_S)\leq R(\btheta, \delta)\ \ \ \ \text{for any }\delta\in \calD^S, 
\end{equation}
and, because $\delta^*_S$ has been identified explicitly, we now also have an explicit form of the bound \eqref{eq:bound-cd-simple}. 
Note that for each fixed $\btheta_0\in \Theta^n$, the bound above is indeed attained by the simple rule $\delta_{S}^0 := \argmin_{\delta\in \calD^S}R(\btheta_0, \delta)$, which does not involve the true value $\btheta$ and is therefore a legal decision rule (as opposed to the oracle). 
Unfortunately, $\delta_{S}^0$ does depend on $\btheta_0$, hence, while the bound is attained {\em locally} at each $\btheta_0$, there is no simple rule that attains it {\em uniformly} in $\btheta$. 
From the algorithmic perspective this appears disappointing because, $\btheta$ being unknown, it is unclear what simple rule to choose in practice. 

Nevertheless, Robbins realized that, under some mild conditions, the bound \eqref{eq:bound-cd-simple} can in fact be {\em asymptotically} attained uniformly in $\btheta$, if not by a simple rule. 
His argument proceeds by noting that, while $\delta^*_S$ depends on the true components $\theta_1,...,\theta_n$ of $\btheta$, this dependence is only through their empirical distribution $G^*$. 
This empirical distribution can in turn be {\em estimated} by taking an {\em empirical Bayes} (EB) approach. 
Thus, Robbins proposed to proceed under the working assumption \eqref{eq:model-bayes-simple}, but 
replace $G^*$ with an {unknown} fixed distribution $G$, and {\em estimate} the corresponding Bayes rule $\delta^G_i \equiv t_{G}$. 
In general, such a nonparametric EB estimator can be constructed in a plug-in manner as $\delta^{EB}_i = \widehat{t_G} = t_{\hat{G}}$, where 
$\hat{G}$ is a {\em deconvolution} estimate of $G$, e.g. a nonparametric maximum-likelihood estimator \citep[NPMLE,][]{robbins1950generalization, kiefer1956consistency, chen2017consistency} or an approximation thereof \citep[e.g.][]{efron2016empirical}. 
For some families $f(\cdot; \theta)$ in \eqref{eq:model-cd}, it is actually possible to estimate $t_G$ directly, that is, to bypass estimating $G$ itself \citep{robbins1955empirical, brown2009nonparametric, koenker2014convex, efron2011tweedie}. 
The claim to fame of Robbins's nonparametric EB approach is that $\delta^{EB}$ will, in general, asymptotically attain the risk of the oracle simple rule, 
\begin{equation}
\label{eq:asymp-opt-simple}
R(\btheta; \delta^{EB}) - R(\btheta; \delta^*_S) \ \rightarrow 0\ \ \ \ \ \ \text{as $n\to \infty$}
\end{equation}
for all sufficiently `well-behaved' $\btheta\in \Theta^n$, for example, when $G^*\to G_0$ weakly for some fixed distribution $G_0$  
\citep[][]{robbins1951asymptotically, zhang1997empirical}. 
Hence, $\delta^{EB}$ asymptotically achieves the same risk at the unknown value $\btheta$ as the best simple rule that knows $\btheta$. 
Unless only small $n$ is of interest, this is arguably a much more appealing property than minimaxity or (standard) Bayes. 

As a matter of fact, the situation for compound decision problems is even more favorable than what is described above. 
Competing with the best simple rule seemed reasonable by the separable structure of the problem, but, for example, any EB rule is not simple because the function $\widehat{t_G}(\cdot)$ itself uses all $Y_1,...,Y_n$. 
The bound \eqref{eq:bound-cd-simple} therefore does not apply to $\delta^{EB}$, and one may wonder what improvement is possible by considering non-simple rules. 
A richer class of estimators is the class $\calD^{PI}$ of all {\em permutation invariant} (PI) rules, satisfying 
\begin{equation}\label{eq:cd-pi}
\delta(\bY_\tau) = (\delta(\bY))_\tau\ \ \ \ \ \ \text{for any permutation}\ \tau\in S_n, 
\end{equation}
where $S_n$ is the set of all bijective functions from $[n]:=\{1,...,n\}$ to itself, and where for any $n$-vector $\bx$ we denote $\bx_\tau := (x_{\tau(1)},...,x_{\tau(n)})$. 
Restricting to PI rules can be formally justified by the invariance of the problem itself under permutations, and is the weakest possible condition that can be imposed on a decision rule in a compound decision problem. 
The class of PI rules clearly includes any simple rule, and also any EB rule. 
In \cite{hannan1955asymptotic} the {oracle PI rule}, 
\begin{equation}\label{eq:oracle-PI-cd}
\delta_{PI}^* := \argmin_{\delta\in \calD^{PI}}R(\btheta, \delta), 
\end{equation}
is identified in explicit form for a binary classification compound decision problem. 
More generally, the argument in \cite{greenshtein2009asymptotic} implies that for {\em any} compound decision problem, $\delta_{PI}^*$ is the Bayes rule under the postulated Bayes model
\begin{equation}\label{eq:model-bayes-cd-PI}
\begin{aligned}
\bthetatil 
\sim \text{uniform on all permutations of } \btheta, \qquad \Ytil_i \lvert \thetatil_i 
\stackrel{ind}{\sim} f(\cdot; \thetatil_i).
\end{aligned}
\end{equation}
In analogy to \eqref{eq:bound-cd-simple}, this gives explicitly the greatest lower bound on the risk of any PI rule, 
\begin{equation}\label{eq:bound-cd-PI}
R(\btheta, \delta^*_{PI})\leq R(\btheta, \delta)\ \ \ \ \text{for any }\delta\in \calD^{PI}. 
\end{equation}
%
Because $\calD^S\subset \calD^{PI}$, the bound \eqref{eq:bound-cd-PI} always lies below \eqref{eq:bound-cd-simple}, i.e.~it is a more ambitious lower bound. 
Nevertheless, in compound decision problems it generally holds that, for all sufficiently well-behaved $\btheta\in \Theta^n$, the difference between \eqref{eq:bound-cd-simple} and \eqref{eq:bound-cd-PI} tends to zero asymptotically, 
\begin{equation}\label{eq:asymp-equiv}
R(\btheta, \delta^*_{S}) - R(\btheta, \delta^*_{PI})  \to 0\ \ \ \ \ \ \text{as $n\to \infty$}. 
\end{equation}
The intuition is that, by de Finetti-type results, the model \eqref{eq:model-bayes-cd-PI} should be similar to \eqref{eq:model-bayes-simple} for large $n$: the only difference between them is in the prior, which in the latter samples the components of $\btheta$ with replacement and in the former without replacement. 
Because the difference between sampling with and without replacement should vanish asymptotically in some sense \citep[e.g.][]{diaconis1980finite}, it is reasonable to expect that the Bayes rule under \eqref{eq:model-bayes-cd-PI} would approach the Bayes rule under \eqref{eq:model-bayes-simple}. 
A formal proof of the asymptotic equivalence  \eqref{eq:asymp-equiv} appears in \cite{hannan1955asymptotic} for their binary classification compound decision problem. 
For a compound decision problem with squared loss, \cite{greenshtein2009asymptotic} established under suitable conditions the stronger result $R(\btheta, \delta^*_{S}) - R(\btheta, \delta^*_{PI}) = O(1/n)$, i.e.~that the oracle simple and PI rules are {asymptotically equivalent} under the {sum}, rather than the average, of the coordinatewise losses. 
Combining \eqref{eq:asymp-equiv} with \eqref{eq:asymp-opt-simple} implies that for a compound decision problem, an EB rule asymptotically attains the risk of the oracle PI rule, i.e.
\begin{equation}
\label{eq:asymp-opt-PI}
R(\btheta, \delta^{EB}) - R(\btheta, \delta^*_{PI})  \to  0\ \ \ \ \ \ \text{as $n\to \infty$}, 
\end{equation}
for all sufficiently well-behaved $\btheta\in \Theta^n$. 
This is to say that the ultimate bound \eqref{eq:bound-cd-PI} is in fact asymptotically attainable {\em uniformly} in $\btheta$ by a legal rule. 
Solutions with such a strong optimality property, if only asymptotic, rarely exist in frequentist problems. 
To appreciate how unique compound decision problems are, we emphasize that if $\Theta^n$ is divided into equivalence classes, where two vectors $\btheta$ and $\btheta'$ are equivalent if they are permutations of each other, then each equivalence class generally has a different optimal PI rule; this is because, to use the language of statistical invariance theory \citep[see e.g.][]{berger1985statistical, eaton1989group}, there is no single {\em orbit} with respect to the permutation group (equivalently, it is not {\em transitive}). 
Hence, for any fixed $n$ there does not exist a so-called `best invariant' rule; 
nonetheless, an {\em asymptotically} best  invariant rule in the sense of \eqref{eq:asymp-opt-PI} does exist for compound decision problems.

\subsection{Contributions}\label{sec:contributions}
Compound decision problems are indeed special as far as admitting satisfactory (asymptotic) solutions, but in applications where the model \eqref{eq:model-cd} itself arises, the problems of practical interest are usually {not} compound. 
For example, in many realistic settings the researcher would be interested to estimate only the $\theta_i$'s corresponding to the few largest $Y_i$'s 
instead of estimating all $n$ components $\theta_i$. 
Or, in multiple hypothesis testing, it is common practice to look for procedures that have small type II error while controlling the familywise error rate (FWER) or false discovery rate (FDR) at some preset level. 
Another example frequently encountered in applications is to test a (composite) global null hypothesis such as $H_0: \theta_1=\cdots=\theta_n$. 
Note that none of these inferential tasks can be formulated as a compound decision problem: the first is a {\em selective inference} problem, meaning that the target parameters are determined only after seeing the data, and cannot even be formally written in the conventional decision theoretic framework (under any loss function). 
Global null testing asks to make a single decision, reject or not reject $H_0$, rather than $n$ individual decisions, so this is clearly not a compound decision problem. 
Multiple hypothesis testing does entail making a decision for each $\theta_i$, but FWER and FDR, as well as their type II error counterparts, cannot be written with a loss function of the form \eqref{eq:loss-sum}. 


The thrust of this paper is to show that some of the theory reviewed above, and which was developed specifically for compound decision problems, in fact applies to a much broader class of {\em permutation invariant} (PI) problems that includes all of the examples mentioned in the paragraph above, and many more. 
Informally, a problem is PI according to our  definition if the entire formulation of the problem is symmetric in the $n$ coordinates of $\btheta$. 
Compared to compound decision problems, this relaxes substantially the conditions on both the model \eqref{eq:model-cd} and  the loss \eqref{eq:loss-sum}. 
Importantly, our definition generalizes the conventional definition of a PI problem in that it allows also selective inference problems, as long as the selection rule is itself symmetric with respect to the coordinates of $\btheta$.

For any PI problem in this broad sense, we first identify the oracle PI rule, the minimizer of the risk at $\btheta$ among all procedures that respect the symmetry in the  problem itself. 
This is based on the observation that the considerations yielding the oracle PI rule \eqref{eq:oracle-PI-cd} in the compound decision case, 
continue to hold when instead of \eqref{eq:model-cd} and \eqref{eq:loss-sum} we merely require the model and the loss to be  {\em symmetric} in the $n$ coordinates of $\btheta$. 
Thus, we conclude that for any PI problem per our general definition, the oracle PI rule is again the Bayes rule under the uniform prior on all $n!$ permutations of the true vector $\btheta$. 
At the technical level, this is a relatively straightforward generalization of a standard result under the conventional (non-selective) definition of a PI problem \citep[see, e.g.,][]{greenshtein2009asymptotic}, but it has far-reaching implications. 
To mention just a few examples, if we assume that the model is PI and restrict attention to PI rules, then our bound characterizes the minimum mean squared error (MSE) in estimating the $\theta_i$ corresponding to the largest $M<n$ observations $Y_i$, whereas \eqref{eq:bound-cd-PI} bounds only the MSE for estimating all $n$ parameters. 
Or, in a multiple testing problem, our bound characterizes the maximum attainable power of a testing procedure subject to $\operatorname{FDR}$ control, compared to \eqref{eq:bound-cd-PI} which can only accommodate simpler error metrics such as $\operatorname{mFDR}$ \citep{sun2007oracle}. 
Similarly, our bound indicates the maximum power in testing the global null $H_0: \theta_1=\cdots=\theta_n$, among all tests satisfying the natural condition that permuting $\bY$ does not change the decision to reject or not reject. 

While the bound we obtain on the risk of a PI rule is explicit, 
the calculation is still intractable as it involves a sum over $n!$ terms.
Making this bound useful, then, clearly requires some  approximation, and we present two approaches. 
The first approach is based on a deterministic approximation from \cite{mccullagh2014asymptotic} for the permanent of a doubly stochastic matrix. 
This produces a deterministic alternative to the more straightforward MCMC approach which approximates the oracle risk based on sampling a subset of permutations at random  \citep{greenshtein2019comment}; 
we report our findings in a compound decision example, which shows both strengths and limitations of this approximation. 
The second approach is also asymptotic, but much more `statistical' in that it establishes asymptotic equivalence of the risks of the simple and PI oracle rules in an example that goes {\em beyond} the compound decision setting. 
Specifically, extending a result of \cite{greenshtein2009asymptotic}, we show that, for the suitable definitions of the simple and PI oracle rules, \eqref{eq:asymp-equiv} continues to hold in an example of estimating the parameter of a {\em selected} population, which is not a compound decision problem. 
It should be noted that the result of \cite{greenshtein2009asymptotic} implies immediately that the difference in risks is $O(1/n)$ for a single coordinate $i$ chosen at random independently of the data, but our situation is different because we consider estimating $\theta_{I^*}$, where 
$I^* = \argmax_{i\leq n} Y_i$ is data-dependent. 
Nevertheless, building on their proof, and with some additional assumptions, we show in Theorem \ref{thm:convergence} that the difference in risks still converges to zero. 
To the best of our knowledge, an asymptotic analysis comparing the risks of the simple and PI oracles has not been carried out before beyond the compound decision case. 

The contributions included in this paper concentrate on identifying and studying the greatest lower bound itself in a general PI problem. 
By definition, the bound is attained at each $\btheta$ by the oracle PI rule, which does depend on $\btheta$ in general. 
From the practitioner's viewpoint, showing asymptotic {\em uniform} attainability---i.e., that the bound is asymptotically attainable at every $\btheta$ by a rule that does  not depend on $\btheta$---would of course add much appeal to the results. 
As mentioned earlier, for the special sub-class of compound decision problems, it is known that this is generally achievable using a (nonparametric) EB approach. 
The situation is more complicated when moving to PI problems that are not compound: 
while our result in Theorem \ref{thm:convergence} still suggests using the (usual) EB approach whenever asymptotic equivalence holds between the simple and PI oracles, the property \eqref{eq:asymp-opt-simple} is not  guaranteed to hold in general. 
In fact, even in the specific example we consider here of estimating the parameter of the maximum observation, it is not obvious that \eqref{eq:asymp-opt-simple} remains true under reasonable assumptions. 
This point is discussed in the last section, and deserves further and separate investigation.

\subsection{Organization}
The rest of the paper is organized as follows. 
Section \ref{sec:unified} introduces a generalized decision theoretic framework, that can accommodate almost any statistical task, including selective inference problems. 
Using this extended framework, in Section \ref{sec:pi} we formally define PI problems and PI decision rules. 
In Section \ref{sec:oracle} the attainable lower bound on the risk of any PI rule in a PI problem is derived by finding an explicit expression for the oracle PI rule, the computational aspect is addressed, and an interpretation of the oracle PI rule as a minimizer of a {\em conditional} risk is offered. 
Section \ref{sec:application} specializes the bound to some popular PI problems, and discusses relations to some existing results in the literature. 
In Section \ref{sec:attainability} we prove asymptotic convergence of the risks of the oracle PI and oracle simple rules in a particular PI problem of estimating the mean of a selected population. 
We discuss extensions to models with nuisance parameters in Section \ref{sec:nuisance}, and 
Section \ref{sec:discussion} concludes with a discussion that also includes directions for future research.

\section{A unified decision theoretic framework}\label{sec:unified}

As implied in the previous section, we will want to consider here more general problems than compound decision, relaxing the assumptions on both the model for the data and the statistical task. 
Assume in general that the data $\bY = (Y_1,...,Y_n) \in \calY^n$ comes from some given family of (joint) densities w.r.t.~some dominating measure on $\calY^n$, which is 
 parametrized by the fixed vector $\btheta = (\theta_1,...,\theta_n)\in \bTheta\subseteq \Theta^n$, 
\begin{equation}\label{eq:model}
\bY\sim f(\cdot; \btheta). 
\end{equation}
We assume further that $Y_i\sim f^i_{\theta_i}$, i.e., that the marginal likelihood $Y_i\sim f^i_{\btheta} := \int f(\cdot, \by_{-i}; \btheta)d\by_{-i}$ depends only on $\theta_i$. 
Unlike \eqref{eq:model-cd}, there is no assumption that the $Y_i$'s are independent, or even (at this point) that they all have the same marginal likelihood function. 
Working under this model, we need to make some modifications to the conventional decision theory framework 
in order to accommodate the various statistical problems mentioned in Section \ref{sec:contributions}: 
first of all, in some of the problems there is a {\em validity} condition, for example a confidence interval must cover the true parameter with a preset probability or a hypothesis test must control a Type I error, but in the standard decision theory framework there is conventionally no component allowing us to rule out invalid procedures.\footnote {Note that adjusting the loss function, for example by setting it to infinity in certain cases, will not resolve the issue because the condition is on the risk, not the loss. }
Second, and more importantly, the loss function in the usual framework does not allow the statistician to choose the target of inference adaptively, i.e., as a function of the data. 
Thus, we propose to make two modifications to the standard decision theory framework:
\begin{enumerate}[label=\roman*)]
\item We allow to specify in advance a subset $\calD_0$ of the set of all (measurable) decision rules $\delta: \calY^n\to \calA$, such that only rules in $\calD_0$ can be chosen by the statistician. 
\item We allow the loss function $L:\calY^n\times \bTheta\times \calA \to [0,\infty)$ to take as input the observed vector $\by\in \calY^n$ in addition to $\btheta\in \bTheta$ and $a\in \calA$, i.e.~the loss is $L(\by,\btheta,a)$. 
\end{enumerate}
With appropriate choices for the elements $(\calA, \calD_0, L)$, almost any statistical inference problem under the model \eqref{eq:model} can be formally stated equivalently in the proposed generalized decision theoretic framework, so that a given decision rule in $\calD_0$ is better than another at a particular $\btheta$ in the ordinary sense, if and only if its {\em risk} at $\btheta$, defined henceforth as
$$
R(\btheta,\delta) = \EE_{\btheta}[L(\bY, \btheta, \delta(\bY))], 
$$
is smaller. 
We now demonstrate this for some popular statistical problems that arise under \eqref{eq:model}, including the examples mentioned in Section \ref{sec:contributions}.

\medskip

\begin{enumerate}[wide, labelwidth=!, labelindent=0pt] 
\item {\em Global null testing}. 
Allowing the global null to be in general a {composite} hypothesis, we are interested in testing 
$H_0:\ \btheta\in \boldsymbol{\Theta}_0$ for some $\boldsymbol{\Theta}_0 \subset \bTheta$. 
The action space will be $\calA = \{0,1\}$, where $a=1$ signifies rejection of the null, and the loss will be
$$
L(\btheta,a)=
\begin{cases}
1,&\text{if $\btheta \notin \boldsymbol{\Theta}_0,\ a=0$}\\
0,&\text{otherwise}.
\end{cases}
$$
To enforce validity of a test, we take $\calD_0$ to be the set of all rules $\delta:\calY^n\to \{0,1\}$ s.t. 
$\PP_{\btheta}(\delta(\bY)=1)\leq \alpha$
for all $\btheta\in \boldsymbol{\Theta}_0$, 
%
in other words, only rules $\delta$ that control the Type I error are acceptable. 
Notice that for $\btheta \notin \boldsymbol{\Theta}_0$ the risk is the probability of a Type II error, $\PP_{\btheta}(\delta(\bZ)=0)$, and the problem coincides with the ordinary Neyman--Pearson formulation. 

\item {\em Multiple hypothesis testing}. 
We want to test each of the (composite, in general) hypotheses $H_{0i}:\ \theta_i\in \Theta_0, \ i=1,...,n$, where $\Theta_0\subseteq \Theta$ is prespecified. 
The action space is $\calA = \{0,1\}^n$, where, for $\ba = (a_1,...,a_n)\in \calA$, the coordinate $a_i=1$ if the $i$-th null is rejected and $a_i=0$ otherwise. 
For convenience define also $h_i = \mathbbm{1}(\theta_i\notin \Theta_0), i=1,...,n$. 
The choices of $L$ and $\calD_0$ depend on error criteria under consideration. 
If we define 
$$
\operatorname{fdp}(\btheta, \ba) := \frac{\sum_{i=1}^n (1-h_i) a_i}{\sum_{i=1}^n a_i}, 
$$
with the convention $0/0 = 0$, and take $\calD_0$ to be all rules $\delta:\calY^n\to \calA$ such that $\operatorname{FDR}(\btheta, \delta) := \EE_{\btheta}\operatorname{fdp}(\btheta,\delta(\bY)) \leq \alpha$ for all $\btheta\in \bTheta$, then a procedure is considered valid if it controls the false discovery rate (FDR) at $\alpha$, which is a popular choice in modern literature. 
For type II errors there is no single accepted metric in the literature; one option is to take 
$
L(\btheta, \ba) = \sum_{i=1}^n h_i(1-a_i), 
$
in which case the risk measures type II errors by the expected number of false non-rejections, as considered in, e.g., \cite{heller2021optimal}. 
Other reasonable options include the false non-discovery rate (FNR), the expected ratio of the number of false non-discoveries to the total number of non-discoveries \citep{sun2007oracle}; or the expected ratio of the number of false non-discoveries to the total number of non-nulls \citep{su2017false}. 
To summarize, the choices above for $\calD_0$ and $L$ recover the standard problem of minimizing a type II error rate subject to FDR control. 
It is worth noting that metrics such as $\operatorname{mFNR}:= \EE_{\btheta} [\sum_{i=1}^n h_i(1-\delta_i(\bY))] / \EE_{\btheta} [\sum_{i=1}^n (1-\delta_i(\bY))]$, taking expectations separately in the numerator and denominator, do not immediately fit our framework, because this cannot be expressed as just the expectation of some loss function. 

\item {\em Confidence intervals for selected parameters}. 
Let $s:\mathcal{Y}^n\to \{0,1\}^n$ be a given selection rule, such that $s_i(\bY)=1$ if $\theta_i$ is selected for inference, and $s_i(\bY)=0$ if not. 
The (selective) inference task is to construct a confidence interval (CI) for each of the selected $\theta_i$'s, where a procedure is considered valid if a specified miscoverage error rate is controlled for the constructed CIs. 
For example, \cite{benjamini2005false} proposed to control the false coverage rate (FCR), the expected ratio of noncovering constructed CIs to the total number of constructed CIs. 
Subject to the validity condition, it may be desirable to minimize, e.g., the expected average length of the constructed CIs, or the expected proportion of CIs that cover zero among the constructed CIs \citep{benjamini1998confidence, weinstein2014selective}, or a combination of the two. 
To formalize this in our framework, let $\calA = (\mathcal{P}(\Theta))^n$, with $\mathcal{P}(\Theta)$ the set of all subsets of $\Theta$, and by convention let $a_i = \emptyset$ if a CI is not constructed for $\theta_i$. 
For any decision rule $\delta:\calY^n\to \calA$, define 
$$
\operatorname{fcp}(\by, \btheta, \ba) = \frac{\sum_{i=1}^n s_i(\by) \mathbbm{1}(\theta_i\notin a_i)}{\sum_{i=1}^n s_i(\by)}, 
$$
again with the convention $0/0 = 0$. 
Then we take $\calD_0$ to be all rules $\delta$ for which $\operatorname{FCR}(\btheta, \delta) := \EE_{\btheta}\operatorname{fcp}(\bY, \btheta, \delta(\bY)) \leq \alpha$ for all $\btheta\in \bTheta$. 
To promote procedures that construct short CIs, take 
$$
L(\by, \btheta, \ba) = \frac{\sum_{i=1}^n s_i(\by) \mu(a_i)}{\sum_{i=1}^n s_i(\by)}, 
$$
where $\mu(B)$ denotes the Lebesgue measure of a subset $B\subset \Theta$, assuming $\Theta\subseteq \RR^d$ (for $k=1$ and an interval $B$, this is just the length). 
Hence, the formulation above calls to minimize the expected average length for the constructed CIs subject to FCR control. 

\item {\em Effect-size estimation for selected parameters}. 
To fit the problem into our framework, take $\calA = \Theta^n$, with the interpretation that for $\ba = (a_1,...,a_n) \in \calA$, the coordinate $a_i$ is the estimate of $\theta_i$ in case $\theta_i$ is selected. 
Here $\calD_0$ is unconstrained, i.e.~ it consists of all mappings $\delta: \calY^n\to \calA$. 
To define an appropriate loss, first let $s:\calY^n\to \{0,1\}^n$ be the selection rule, so that $s_i(\bY)=1$ if $\theta_i$ is selected, and $s_i(\bY)=0$ if not. 
If we take, e.g., 
\begin{equation*}
L(\by, \btheta, \ba) = \sum_{i=1}^n s_i(\by)(a_i-\theta_i)^2,
\end{equation*}
 the risk is the mean sum-of-squares error for the selected only. 
Note that, as a special case, this accommodates selection of the indices corresponding to the $M$ largest observations, even when $M$ itself is data-dependent (e.g., if $M$ is the number of rejections made by the level-$\alpha$ Benjamini-Hochberg procedure). 
\end{enumerate}

Many other inferential tasks may be considered, of course, e.g.~multiple sign-classification with familywise \citep{shaffer1980control} or {\em false sign rate} \citep{benjamini1993false, stephens2017false} control; 
estimation of random sums $S_n := \sum_{i=1}^n u(Y_i, \theta_i)$ \citep{zhang2005estimation, greenshtein2008estimating}; or the ranking-based selection problems treated in, e.g., \cite[][]{gu2023invidious}.

\section{Permutation invariant problems}\label{sec:pi}

Working under the framework of Section \ref{sec:unified}, we can now formalize the notions of permutation invariance from the introduction. 
We assume from now on that the set $\calD_0$ of feasible rules is of the form
\begin{equation}\label{eq:d-0}
\calD_0  = \{ \delta: \ \EE_{\btheta}[K(\bY,\btheta,\delta(\bY))] \leq \alpha \ \text{ for all $\btheta\in \bar{\bf \Theta}$}\},
\end{equation}
for some set $\bar{\bf \Theta} \subseteq \bTheta$ that may (but does not have to) depend on the {\em true} value $\btheta$, and for some prespecified function $K(\by,\btheta,a)$ that, similarly to the loss, is defined on $\calY^n\times \bTheta \times \calA$ and returns a nonnegative number. 
We will explain later why this is needed, and why $\bar{\bf \Theta}$ is allowed to depend on $\btheta$; at this point, notice only that in all of the examples of the previous section, $\calD_0$ can indeed be written in this form, with $\bar{\bf \Theta}$ independent of $\btheta$. 

\begin{definition}[Permutation invariant decision problem]\label{def:pi}
Consider a statistical problem in the framework of Section \ref{sec:unified}, with $\calD_0$ of the form \eqref{eq:d-0}. 
We say that the problem is {\it permutation invariant} if: 
\begin{enumerate}[label=(\roman*)]
\item The {\em model} in \eqref{eq:model} is permutation invariant: first, the parameter space $\bTheta$ is closed under permutations, i.e.~for every $\btheta \in \bTheta$ and $\tau\in S_n$ we have $\btheta_{\tau} \in \bTheta$. 
Furthermore, for any $\btheta$ and any $\tau\in S_n$, it holds that ${\bY}_{\tau} \sim f(\cdot; \btheta_{\tau})$. \label{PI-model}
\item The {\em loss function} is permutation invariant: for any $a\in \calA$, any $\tau\in S_n$ and any $\by\in \calY^n$, there exists $a^*\in \calA$ such that $L(\by_{\tau}, \btheta_\tau, a^*) = L(\by, \btheta, a)$ for all $\btheta\in \bTheta$. 
In this case, assuming $L(\by, \btheta, a') = L(\by, \btheta, a)$ for all $\by$ and $\btheta$ implies $a'=a$, the action $a^*\in \calA$ will be unique and we denote it by $\tilde{\tau}(a,\by)$. 
\label{PI-loss}
\item The {\em feasible set} $\calD_0$ is permutation invariant: the function $K$ in \eqref{eq:d-0} has the same property as $L$ in the previous item, and the set $\bar{\bf \Theta}$ is PI, meaning that 
$\btheta\in \bar{\bf \Theta}$ implies $\btheta_{\tau}\in \bar{\bf \Theta}$ for any $\tau\in S_n$, any $\btheta\in \bTheta$. 
\end{enumerate}
\end{definition}


This definition generalizes the usual definition of a PI problem \citep[e.g.,][Ch. 6.2]{berger1985statistical}, the main difference being that Definition \ref{def:pi} applies to loss functions in the generalized sense, i.e., the loss may depend also on $\bY$. 
Remembering that $\btheta$ always indexes the distribution of $\bY$ (not  $\bY_\tau$), condition \ref{PI-model} says that the distribution of $\bY_\tau$ under $\btheta$ is the same as the distribution of $\bY$ under $\btheta_{\tau}$. 

Note that a compound decision problem is only a very special case of a PI problem as defined above. 
First, for any family of univariate distributions $f(\cdot; \theta), \theta\in \Theta$, the model \eqref{eq:model-cd} is PI, for example if the independent observations are $Y_i\sim \calN(\theta_i, 1)$ (but not if $Y_i\sim \calN(\theta_i, \sigma^2_i)$ for known $\sigma^2_i$, unless they are all equal). 
However, independence is not necessary: for example, if $Y_i = \theta_i + \epsilon_i$, with $\epsilon_i$ zero-mean exchangeable random variables (in particular, if the errors vector is an equi-correlated gaussian), or if $\bY$ is multinomial with parameter $\btheta\in \Delta_{n-1}$, where $\Delta_{n-1} = \{\btheta\in [0,1]^n: \sum \theta_i=1\}$, then the $Y_i$'s are not independent but the model is still PI. 
Second, for any $\bar{L}$ the loss in \eqref{eq:loss-sum} is clearly PI, but the condition (ii) in our definition is much weaker, requiring only symmetry of the loss function w.r.t.~the coordinates $i=1,...,n$ of the problem. 


By the general invariance principle, permutation invariant problems naturally call to restrict attention to permutation invariant {\em rules}\footnote{It is more accurate to refer to rules satisfying \eqref{eq:pi-rule}, which in the compound decision case reduces to \eqref{eq:cd-pi}, as permutation {\em equivariant}, but we keep ``invariant" for terminological consistency.}, namely, to procedures $\delta$ satisfying that 
%
for every $\bY\in \calY^n$ , 
\begin{equation}\label{eq:pi-rule}
\delta(\bY_{\tau}) = \tilde{\tau}(\delta(\bY), \bY)\ \ \ \ \ \ \ \text{for all $\tau\in S_n$}, 
\end{equation}
and recall the definition of $\tilde{\tau}$ in Definition \ref{def:pi}. 
In the special case of a compound decision problem $\tilde{\tau}(\ba,\by) = \ba_{\tau}$, and \eqref{eq:pi-rule} becomes \eqref{eq:cd-pi};  we need the more general form \eqref{eq:pi-rule}, involving $\tilde{\tau}$, for the principle to apply beyond the compound decision setting, i.e.~to generalize the condition to every PI problem per Definition \ref{def:pi}. 
For any PI problem, we let $\calD^{PI} $ be the collection of all PI rules, and we let $\calD^{PI}_0 = \calD^{PI} \cap \calD_0$ be the collection of all {\em feasible} PI rules. 

Because adding the PI requirement on $\delta$ is so natural when the problem itself is PI, we consider $\calD^{PI}_0$ to be really the largest class of decision rules one could possibly consider. 
The following simple property of PI rules will be leveraged in the next section. 

\begin{lemma}\label{lem:orbits}
Consider a decision problem in the framework of Section \ref{sec:unified}. 
If the problem is PI,  then for any PI rule $\delta$ and for any $\btheta$, 
$$
R(\btheta, \delta) = R(\btheta_\tau, \delta)\ \ \ \ \ \ \text{for all $\tau\in S_n$}.
$$
\end{lemma}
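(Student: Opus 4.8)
The plan is to obtain the stated invariance of the risk from the three parts of Definition~\ref{def:pi} together with the defining identity~\eqref{eq:pi-rule} for a PI rule, by performing the single change of variables $\bZ\mapsto g(\bZ)$. Fix a PI rule $\delta$, an arbitrary $\btheta$, and $g\in S_n$; we must show $R(g(\btheta),\delta)=R(\btheta,\delta)$. First I would write out $R(g(\btheta),\delta)=\EE_{g(\btheta)}[L(\bZ,g(\btheta),\delta(\bZ))]$ and invoke condition~A: since the law of $g(\bZ)$ under $\PP_{\btheta}$ equals the law of $\bZ$ under $\PP_{g(\btheta)}$, we have $\EE_{g(\btheta)}[\phi(\bZ)]=\EE_{\btheta}[\phi(g(\bZ))]$ for every nonnegative measurable $\phi$ on $\calZ^n$ (no integrability assumption is needed because $L\ge 0$, so the identity holds even when both sides are $+\infty$). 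Taking $\phi(\bz)=L(\bz,g(\btheta),\delta(\bz))$ yields
\[
R(g(\btheta),\delta)=\EE_{\btheta}\bigl[L(g(\bZ),g(\btheta),\delta(g(\bZ)))\bigr].
\]

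Next I would simplify the integrand using the two permutation-equivariance properties in turn. By~\eqref{eq:pi-rule}, $\delta(g(\bZ))=\tilde{g}(\delta(\bZ))$ pointwise, so the integrand equals $L(g(\bZ),g(\btheta),\tilde{g}(\delta(\bZ)))$. Condition~B then asserts exactly that $L(g(\bz),g(\btheta),\tilde{g}(a))=L(\bz,\btheta,a)$ for all $\bz,\btheta,a$ (this is the content of ``$a^*=\tilde{g}(a)$''), and applying it with $\bz=\bZ$, $a=\delta(\bZ)$ collapses the integrand to $L(\bZ,\btheta,\delta(\bZ))$. Hence $R(g(\btheta),\delta)=\EE_{\btheta}[L(\bZ,\btheta,\delta(\bZ))]=R(\btheta,\delta)$, and since $g\in S_n$ was arbitrary the lemma follows.

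I do not anticipate any genuine difficulty; the argument is a few lines of bookkeeping once the pieces are lined up. The only point that merits a word of care is the reading of condition~B: one must use that the matching action $\tilde{g}(a)$ depends on $g$ and $a$ only, and not on $\bz$ (or on $\btheta$), which is precisely what makes $\tilde{g}$ a well-defined transformation of $\calA$ and is already presupposed in writing~\eqref{eq:pi-rule}. It is also worth noting that condition~C (invariance of $\calD$) plays no role in this particular lemma — only model-invariance~A, loss-invariance~B, and the PI property of the rule enter.
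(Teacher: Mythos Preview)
Your proposal is correct and follows essentially the same approach as the paper's own proof: start from $R(g(\btheta),\delta)=\EE_{g(\btheta)}[L(\bZ,g(\btheta),\delta(\bZ))]$, use model invariance (condition~A) to rewrite this as $\EE_{\btheta}[L(g(\bZ),g(\btheta),\delta(g(\bZ)))]$, apply the PI property of $\delta$ to replace $\delta(g(\bZ))$ by $\tilde g(\delta(\bZ))$, and then use loss invariance (condition~B) to collapse to $\EE_{\btheta}[L(\bZ,\btheta,\delta(\bZ))]=R(\btheta,\delta)$. Your additional remarks---that nonnegativity of $L$ sidesteps integrability concerns, that $\tilde g(a)$ must be independent of $\bz$ for \eqref{eq:pi-rule} to make sense, and that condition~C is not used---are accurate and helpful clarifications that the paper's proof leaves implicit.
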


\begin{proof}
Recall that the risk of any rule $\delta$ is defined as 
$$
R(\btheta, \delta) \!= \! \EE_{\btheta}[L(\bY, \btheta, \delta(\bY))] \!= \!\!\int \!\!L(\by, \btheta, \delta(\by)) f(\by;\btheta) d \by. 
$$
Then, we have
$$
\begin{aligned}
R(\btheta_{\tau}, \delta) = \EE_{\btheta_{\tau}} L(\bY, \btheta_{\tau}, \delta(\bY)) = \EE_{\btheta} L(\bY_{\tau}, \btheta_{\tau}, \delta(\bY_{\tau})) 
&= \EE_{\btheta} L(\bY_{\tau}, \btheta_{\tau}, \tilde{\tau}(\delta(\bY))) \\
& = \EE_{\btheta} L(\bY, \btheta, \delta(\bY))= R(\btheta, \delta), 
\end{aligned}
$$
where the second equality holds because the model is PI, the third equality is because $\delta$ is PI, and the fourth equality is because the loss is PI. 
\end{proof}

Lemma \ref{lem:orbits} says that the risk of any PI rule is constant on any {\em orbit}, 
$
\mathcal{O}(\btheta) = \{\btheta_\tau:\ \tau\in S_n\} \subseteq \bTheta. 
$
For PI problems per the conventional definition in the literature (in particular, for a compound decision problem), 
Lemma \ref{lem:orbits} recovers a basic result in the theory of invariance expressing the best invariant rule in terms of the appropriate Haar measure \cite[][Ch. 6.2]{berger1985statistical}.

\section{An explicit form of the oracle PI rule}\label{sec:oracle}
Consider any PI problem per Definition \ref{def:pi}. 
Our aim in this section is to identify explicitly the {\em oracle PI rule}, 
\begin{equation}\label{eq:oracle-PI}
\delta_{PI}^* := \argmin_{\delta\in \calD_0^{PI}} R(\btheta, \delta). 
\end{equation}
By the definition of $\delta_{PI}^*$, we have 
\begin{equation}\label{eq:bound-PI}
R(\btheta, \delta^*_{PI})\leq R(\btheta, \delta)\ \ \ \ \text{for any }\delta\in \calD^{PI}_0, 
\end{equation}
i.e., the left hand side is the greatest lower bound on the risk of any feasible PI rule. 
Hence, if we find an explicit form of $\delta_{PI}^*$ we also have an explicit form of the bound \eqref{eq:bound-PI}. 
This is a trivial observation, but it is useful because, whereas $\delta_{PI}^*$ depends on $\btheta$ and is therefore not a legal decision rule, $R(\btheta, \delta^*_{PI})$ is a perfectly legal lower bound on the risk. 

In the special case of a compound decision problem, \eqref{eq:oracle-PI} reduces to \eqref{eq:oracle-PI-cd}. 
As reviewed in Section \ref{sec:intro}, in this case it is a known result that $\delta_{PI}^*$ is given explicitly by the Bayes rule under a (postulated) prior that assigns equal mass to each of the $n!$ permutations of the fixed vector $\btheta$. 
We now generalize this result, proving that in {\em any PI problem}, the oracle PI rule $\delta_{PI}^*$ is again the minimizer of the Bayes risk under that same prior. 

Thus, assume that $\bY$ follows a model of the form \eqref{eq:model}, and that this model is PI. 
Now define a random pair $(\bthetatil,\bYtil)$ distributed according to
\begin{equation}\label{eq:bayes-PI}
\tau \sim \text{uniform on }S_n, \quad \quad (\bthetatil, \bYtil)|\tau \sim (\btheta_\tau, \bY_\tau). 
\end{equation}
Note that this distribution depends on the fixed $\btheta$ up to a permutation. 
From now on we write $\EE$ for expectation w.r.t.~$(\bthetatil,\bYtil)$ under the (Bayes) model \eqref{eq:bayes-PI}, remembering that it still depends on $\btheta$ up to permutation, and we write  $\EE_{\btheta}$ for expectation w.r.t.~$\bY$ under the original (frequentist) model \eqref{eq:model}. 
Now suppose there exists a solution $\deltabayes$ to the optimization problem given by 
\begin{equation}\label{eq:optimization}
\begin{aligned}
\displaystyle{\minimize_{\delta}} \quad &\EE L(\bYtil, \bthetatil, \delta(\bYtil))\\
\textup{s.t.} \quad &\EE K(\bYtilprime, \bthetatilprime, \delta(\bYtilprime)) \leq \alpha\ \ \forall \btheta'\in \bar{\bf \Theta}, 
\end{aligned}
\end{equation}
where $(\bthetatilprime,\bYtilprime)$ has the distribution of $(\bthetatil,\bYtil)$ when $\btheta$ is replaced by $\btheta'$. 
In this case, the following result says that $\deltabayes$ is the oracle PI rule in the original problem. 

\begin{theorem}\label{thm:main}
In the framework of Section \ref{sec:unified}, consider a PI problem with loss $L(\by,\btheta,a)$ and an associated set 
$\calD_0$ 
of feasible rules given by \eqref{eq:d-0}. 
Then for any fixed $\btheta$ we have $\delta^*_{PI} = \deltabayes$, that is, 
 $R(\btheta, \deltabayes) = \min_{\delta\in \calD^{PI}_0}R(\btheta, \delta)$. 
\end{theorem}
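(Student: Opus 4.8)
The plan is to show that, once attention is restricted to permutation invariant rules, the original frequentist problem and the problem \eqref{eq:oracle-explicit-gen} become the same optimization, and then to verify that \eqref{eq:oracle-explicit-gen} has a permutation invariant optimal solution. The computational core is two elementary identities. For an \emph{arbitrary} rule $\delta$, the change of variables $\bw=g(\bz)$ together with invariance of the model (condition A of Definition~\ref{def:pi}) gives $\EE_{\btheta}[L(\bW,\bxi,\delta(\bW))]=\tfrac{1}{n!}\sum_{g\in S_n}R(g(\btheta),\delta)$, and the same computation with $h$ in place of $L$ writes the constraint functional of \eqref{eq:oracle-explicit-gen} --- formed with the orbit of any $\btheta'$ --- as $\tfrac{1}{n!}\sum_{g}\EE_{g(\btheta')}[h(\bZ,g(\btheta'),\delta(\bZ))]$. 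Restricting now to $\delta\in\calD_{\text{PI}}$, Lemma~\ref{lem:orbits} --- applied to $L$, and verbatim to the invariant function $h$ --- makes each summand constant over the orbit, so the averages collapse and we obtain, for every permutation invariant $\delta$,
$$
R(\btheta,\delta)=\EE_{\btheta}\!\left[L(\bW,\bxi,\delta(\bW))\right],\qquad
\delta\in\calD\iff \EE_{\btheta'}\!\left[h(\bW,\bxi,\delta(\bW))\right]\le\alpha \ \text{ for all } \btheta'\in\bar{\calT}^{\,n},
$$
where on the right the triple is distributed as in \eqref{eq:joint} with $\btheta'$ replacing $\btheta$, and one uses that $\bar{\calT}^{\,n}$ is a union of $S_n$-orbits. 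Hence $\calD^{*}$ is precisely the set of permutation invariant rules feasible for \eqref{eq:oracle-explicit-gen}, and on $\calD^{*}$ the risk $R(\btheta,\cdot)$ equals the objective of \eqref{eq:oracle-explicit-gen}; since $\calD^{*}$ lies inside the feasible set of \eqref{eq:oracle-explicit-gen}, this already gives $\min_{\delta\in\calD^{*}}R(\btheta,\delta)\ge \EE_{\btheta}[L(\bW,\bxi,\delta^{B}(\bW))]$.

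For the reverse inequality I would show that $\delta^{B}$ may be chosen permutation invariant; then $\delta^{B}\in\calD^{*}$ by the equivalence above, $R(\btheta,\delta^{B})$ equals the optimal value of \eqref{eq:oracle-explicit-gen} by the first identity, and combining the two bounds finishes the proof. The key observation is that \eqref{eq:oracle-explicit-gen} is itself $S_n$-invariant: the joint law \eqref{eq:joint} is unchanged under the relabeling $(\bxi,\bW)\mapsto(g(\bxi),g(\bW))$ because the orbit of $\btheta$ is stable under $S_n$ and $f$ is invariant, and then invariance of $L$ and $h$ shows that replacing $\delta$ by $\delta^{(g)}$, defined by $\delta^{(g)}(\bw)=\tilde{g}(\delta(g^{-1}(\bw)))$, alters neither the objective nor any constraint of \eqref{eq:oracle-explicit-gen}. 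Averaging $\delta^{(g)}$ over $g\in S_n$ --- as a randomized rule, or directly in $\calA$ when $\calA$ is convex --- then produces a permutation invariant rule that remains feasible and whose objective does not exceed that of $\delta^{B}$; being no worse than optimal it is optimal, and we take it to be $\delta^{B}$.

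I expect the symmetrization step to be the only real obstacle. In the unconstrained case $\calD^{*}=\calD_{\text{PI}}$ it is free, because \eqref{eq:oracle-explicit-gen} is then the minimization of a posterior expected loss and its Bayes solution inherits permutation invariance from the symmetry of the ``prior'' \eqref{eq:joint} and of the likelihood --- this is the heuristic of Section~\ref{subsec:motivation}. With a nontrivial constraint \eqref{eq:oracle-explicit-gen} is no longer a plain Bayes problem, so one must check separately that symmetrization preserves feasibility while not raising the objective; this is exactly where the $S_n$-invariance of \emph{both} functionals in \eqref{eq:oracle-explicit-gen}, and (in full generality) the admissibility of randomized rules, come in. A secondary, routine matter is to make precise that $\bar{\calT}^{\,n}$ is permutation-stable, so that Lemma~\ref{lem:orbits} may legitimately be applied to the constraint.
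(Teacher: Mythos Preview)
Your proposal is correct and follows essentially the same route as the paper: use Lemma~\ref{lem:orbits} (applied to both $L$ and $h$) to identify, over $\calD_{\text{PI}}$, the original problem \eqref{eq:delta-star-def} with the averaged problem \eqref{eq:oracle-explicit-gen}, and then argue that the unconstrained optimizer $\delta^B$ of \eqref{eq:oracle-explicit-gen} is itself permutation invariant. The only notable difference is that where the paper dispatches the last step in a single sentence (``from the complete symmetry with respect to the coordinates $1,\dots,n$, it follows that the solution to \eqref{eq:oracle-explicit-gen} will anyway be PI''), you supply the explicit symmetrization argument via $\delta^{(g)}(\bw)=\tilde g(\delta(g^{-1}(\bw)))$ and averaging over $S_n$, and you correctly flag that in the constrained case this requires checking invariance of \emph{both} the objective and the constraint functional, together with the availability of randomized rules (or convexity of $\calA$). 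Your observation that $\bar{\calT}^{\,n}$ is automatically permutation-stable, so that Lemma~\ref{lem:orbits} applies to the constraint, is also a detail the paper takes for granted.
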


\begin{proof}
We need to show that $R(\btheta, \deltabayes) = R(\btheta, \delta^*_{PI})$. 
By definition, $\delta^*_{PI}$ is the solution to 
\begin{equation}\label{eq:delta-star-def}
\begin{aligned}
\displaystyle{\minimize_{\delta\in \calD^{PI}_0}} \quad &R(\btheta, \delta)\\
\textup{s.t.} \quad &\EE_{\btheta'}[K(\bY, \btheta', \delta(\bY))]\leq \alpha\ \ \forall \btheta' \in \bar{\boldsymbol{\Theta}}. 
\end{aligned}
\end{equation}
By Lemma \ref{lem:orbits}, for any PI rule $\delta$, 
$$
\begin{aligned}
R(\btheta, \delta) = \frac{1}{n!}\sum_{\tau\in S_n} R(\btheta_\tau, \delta) = 
\EE R(\bthetatil, \delta) 
= \EE  \EE [L(\bYtil,\bthetatil,\delta(\bYtil)) | \bthetatil] = \EE L(\bYtil,\bthetatil,\delta(\bYtil)). 
\end{aligned}
$$
Because the function $K$ has the same properties as $L$, and because $\bar{\boldsymbol{\Theta}}$ is closed under permutations by assumption, repeating the argument above yields that for any PI rule $\delta$ and any $\btheta'\in \bar{\boldsymbol{\Theta}}$, 
$$
\begin{aligned}
\EE_{\btheta'}K(\bY, \btheta', \delta(\bY)) = 
\EE K(\bYtilprime,\bthetatilprime,\delta(\bYtilprime)).
\end{aligned}
$$
Therefore, \eqref{eq:optimization} and \eqref{eq:delta-star-def} are equivalent, except that in \eqref{eq:optimization} there is no restriction that $\delta$ is PI. 
It is easy to verify, however, that from symmetry of the prior, the likelihood, the objective and the constraint in \eqref{eq:optimization} with respect to the coordinates $1,...,n$, it follows that the {\em solution} to \eqref{eq:optimization} will in fact be PI. 
The claim follows. 
\end{proof}

As stated, Theorem \ref{thm:main} is general enough to accommodate all of the different inference problems mentioned in Section \ref{sec:unified}, and characterizes the oracle PI rule as a constrained minimizer of a Bayes risk under the postulated model \eqref{eq:bayes-PI}, assuming the latter exists. 
In particular, when $\calD_0$ is unrestricted, i.e.~if $\calD_0^{PI} = \calD^{PI}$, define 
$$
\rho(\by,a) = \EE [L(\by, \bthetatil, a)\lvert \bYtil=\by]
$$
to be the {\em selective} posterior expected loss of action $a\in \calA$ under \eqref{eq:bayes-PI}; 
this is different from the posterior expected loss in the usual decision theoretic framework, in that $L$ is a loss function in the broader sense of Section \ref{sec:unified}, i.e.~it may depend on $\by$. 
Now let 
\begin{equation}\label{eq:oracle-PI-unrestricted}
\deltabayes(\bY) = \argmin_{a\in \calA} \rho(\bY,a) 
\end{equation}
be the minimizer of the selective posterior expected loss, assuming it exists. 
Then by standard results from Bayesian theory, the rule \eqref{eq:oracle-PI-unrestricted} minimizes the objective in \eqref{eq:optimization}, assuming a rule with finite risk exists in the original problem; see \cite[][Theorem 4.1.1]{lehmann2006theory}. 
If we further assume that $L(\by, \btheta, a)$ is strictly convex in $a$, that $\EE L(\bYtil, \bthetatil, \deltabayes(\bYtil))$ is finite, and that $f(\cdot;\btheta)$ is absolutely continuous w.r.t.~the marginal distribution of $\bYtil$ for every $\btheta$, then $\deltabayes$ above is unique \citep[][Corollary 4.1.4]{lehmann2006theory}. 
Under these conditions, Theorem \ref{thm:main} implies immediately 

\begin{corollary}\label{cor:unrestricted}
In the setting of Theorem \ref{thm:main}, suppose $\calD_0$ is unrestricted. 
Then, assuming the conditions just stated, the oracle PI rule $\delta^*_{PI}(\bY)$ is given by \eqref{eq:oracle-PI-unrestricted}. 
\end{corollary}

As we demonstrate in the next section, this result has useful implications for selective inference PI problems. 
Recall from Section \ref{sec:unified} that our framework accommodates selective inference problems by using a loss function that depends on $\by$. 
In that case, to obtain the oracle PI rule, Corollary \ref{cor:unrestricted} says that we need to update the distribution of $\bthetatil$ given $\bYtil$ under \eqref{eq:bayes-PI} while {\em ignoring} selection, and minimize the posterior expected loss on the selected parameters only. 
Essentially, this is another manifestation of the indifference to selection in a Bayesian framework; but allowing the loss function itself to depend on the data, as we do here, takes fuller advantage of the fact that the Bayes rule anyway conditions on the data. 

\subsection{Computation}\label{subsec:comp}

While an explicit form is now available for the lower bound $R(\btheta, \delta^*_{PI})$, computation is still a concern because the exact expression  involves a sum over $n!$ terms. 
From the computational perspective there is no essential difference between calculating $R(\btheta, \delta^*_{PI})$ or the oracle itself, because once we have an expression for $\delta^*_{PI}(\by)$ the risk can be approximated by averaging the loss over $\bY$ either by simulating repeated draws of $\bY\sim f(\cdot; \btheta)$ or by numerical integration. 
We focus here on the simplest setup, assuming the independent model \eqref{eq:model-cd} and an unrestricted $\calD_0$. 
For example, consider estimating $\btheta$ from independent data $Y_i\sim \text{Pois}(\theta_i), i\leq n$, under sum-of-squares loss divided by $n$, so the oracle rule is 
$$
\delta^*_{i, PI}(\by) = \frac{\sum_{\tau\in S_n}\theta_{\tau(i)}\prod_{i=j}^n f_{\tau(j)}(y_j)}{\sum_{\tau\in S_n} \prod_{j=1}^n f_{\tau(j)}(y_j)},
$$
where $f_j(y_i):= f(y_i; \theta_j) = e^{-\theta_j} {\theta_j}^{y_i}/y_i!$. 
One approach for approximating the oracle risk is by simulation. 
Let $\delta^*_{sim}(\by)$ denote the estimator obtained by replacing $S_n$ in the above expression for $\delta^*_{i, PI}(\by)$ with a subset $S$ of $N$ randomly drawn permutations. 
This randomized estimator is PI in the broader sense that its risk (expectation taken over $S$ and $\bY$) does not depend on the ordering of $\btheta$, hence its risk upper bounds $R(\btheta, \delta^*_{PI})$; an MCMC estimate of $R(\btheta, \delta^*_{sim})$ will thus be an upwardly biased estimate of $R(\btheta, \delta^*_{PI})$. 
This seems to be the approach taken by \cite[][]{greenshtein2019comment} when producing the simulation results reported in their Figure 1. 
We propose an alternative approach, giving a {\em deterministic} but asymptotic approximation to the PI oracle; 
the following is an adaptation of ideas that I learned from D.~Xiang (private communication). 
The expression for $\delta^*_{PI, 1}(\by)$ in the display above can be written equivalently 
$$
\delta^*_{i, PI}(\by) = 
 \frac{ \sum_{j=1}^n \theta_j f_j(y_i) \sum_{\tau\in S_n:\tau(i)=j} \prod_{k \neq i} f_{\tau(k)}(y_k)}{\sum_{\tau\in S_n} \prod_{k=1}^n f_{\tau(k)}(y_k)}. 
$$ 
Here the denominator is exactly the permanent of the $n\times n$ matrix $A$ with entries $A_{ij} = f_j(y_i)$, and in the numerator the $j$th summand involves the permanent of a $(n-1)\times (n-1)$ matrix, $A^{(i,j)}$, obtained from $A$ by deleting the $i$th row and the $j$th column. 
Now, the matrix $A$ has positive entries, so it can be represented as $A = RBC$, where $B$ is doubly stochastic (positive entries, with unit row and column sums) and $R, C$ are diagonal with positive entries. 
Using the deterministic asymptotic
 approximation in \cite{mccullagh2014asymptotic} for the permanent of a doubly stochastic matrix, 
\begin{equation}\label{eq:perm-approx}
\log \operatorname{per}(n B) \sim \log (n!)-\frac{1}{2} \log \operatorname{det}\big(I_n+t^2 J-t^2 B^{\top} B \big), 
\end{equation}
with $J=n^{-1} 11^\top$ and $t^2 = n/(n-1)$, we can approximate $\operatorname{per}(A) \approx \operatorname{per}(B)\prod_{i=1}^n R_{ii}C_{ii}$. 
Applying this approximation also to the permanents of $A^{(i,j)},\ j=1,...,n$, yields an approximation to $\delta^*_{i, PI}(\by)$. 
All of this assumes the determinant term in \eqref{eq:perm-approx} is positive, otherwise the log is undefined. 

We examined the accuracy of our proposed approximation in a simulation with Poisson likelihood. 
We generated $\theta_i\sim \text{Gamma}(\text{shape}=3,\text{rate}=1)$, indep.~for $i=1,...,n$, and held these values fixed throughout the experiment. 
In each run we draw $\tau$ uniformly on all permutations of $[n]$, then draw $Y_i\sim \text{Pois}(\theta_{\tau(i)})$, indep. 
For any PI estimator the overall risk is estimated by the average of $(\delta_{1,PI}(\bY)-\theta_{\tau(1)})^2$ over $m$ simulation runs. 
For $n=10$, where an exact calculation of the PI oracle estimator is still possible, the absolute percent difference between $\delta_{1,PI}(\bY)$ and the approximation, in $m=100$ simulation rounds, did not exceed 6\%, with mean 1.1\% and median 0.8\%, and in all 100 runs the approximation evaluated successfully, i.e.~the determinant term in \eqref{eq:perm-approx} was positive. 
The estimated risk was 1.69 for the exact oracle and 1.71 for the approximate version. 
These findings demonstrate that the approximation, when applicable, can be remarkably good already for small $n$. 
Figure \ref{fig:ratio} shows estimates of the ratio $R(\btheta, \delta^*_{PI})/R(\btheta, \delta^*_{S})$ for different values of $n$, each point in the plot obtained on $m=1000$ simulation runs (the oddity at $n=40$ seems to be due to random fluctuation). 
The numerator in the ratio is estimated using the proposed deterministic approximation, the denominator uses the exact form of the simple oracle rule. 
In this simulation the approximation evaluated successfully on all 1000 runs for $n=10, 20, ..., 140$; when we tried $n=150$ , the approximation failed on all 1000 runs, i.e., the determinant in \eqref{eq:perm-approx} was negative. 
We encountered issues with the approximation failing also when trying the normal example from \cite{greenshtein2019comment}, $\theta_i\sim \calN(0,9)$, $Y_i\sim \calN(\theta_i, 1)$, although this time the approximation failed more frequently for small, rather than large, values of $n$. 
It is worth mentioning that for this normal example with $n=100$, the computation evaluated successfully on 94\% of $m=1000$ simulation rounds, and the mean of the risk ratio on the successful runs was 0.98, significantly higher than the number $\approx 0.8$ reported  in \cite{greenshtein2019comment}. 
The technical issue with the computation failing of course limits the usefulness of the proposed approach for approximating our lower bound \eqref{eq:bound-PI}; but the encouraging results we obtained on the Poisson example motivate further exploration of the applicability of Mccullagh's formula, perhaps by considering higher order terms.

\begin{figure}
  \centering
\includegraphics[scale=.5]{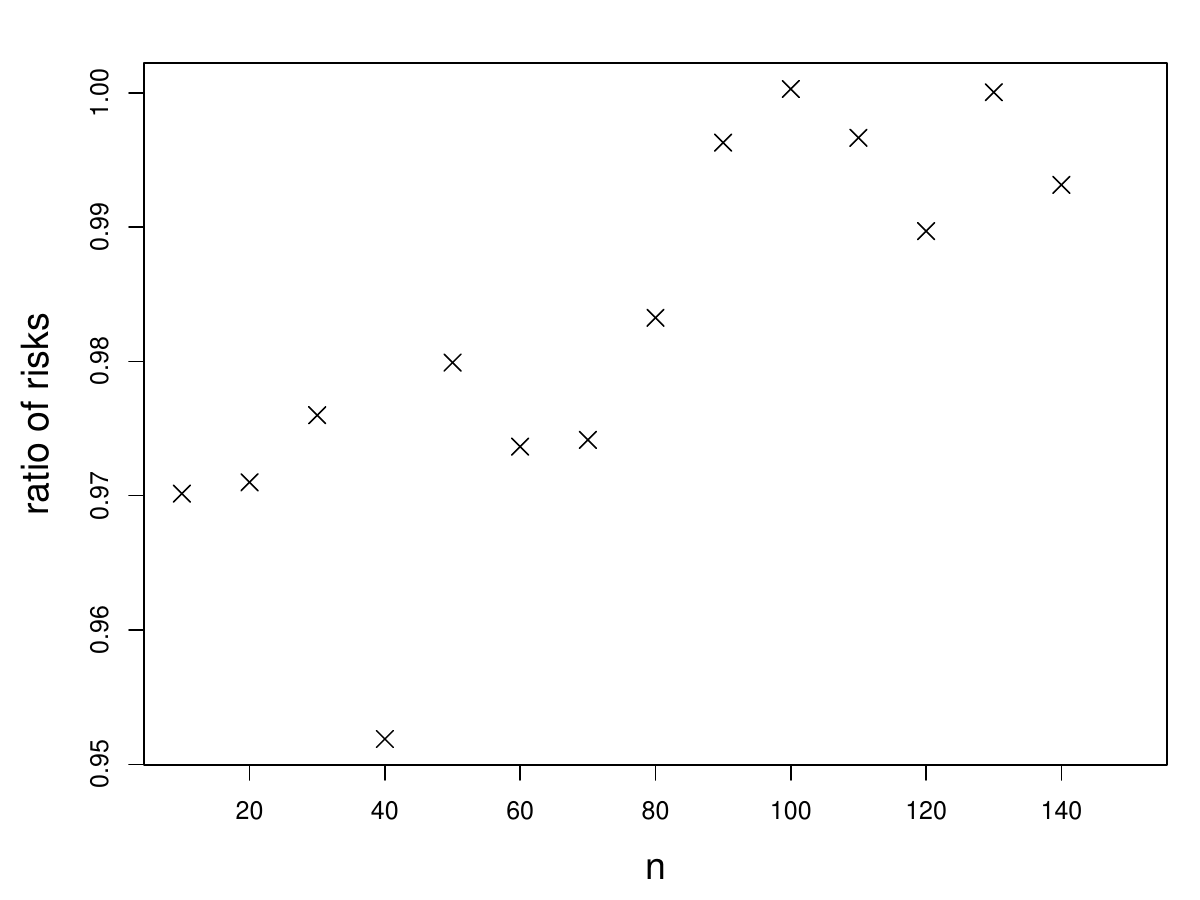} 
\caption{
Estimated ratio of the PI oracle risk to the simple oracle risk, Poisson example. 
}
\label{fig:ratio}
\end{figure}

%

\subsection{Minimizing a conditional risk}\label{subsec:conditional}
The oracle PI rule characterized in Theorem \ref{thm:main} minimizes the point risk for each $\btheta$, but if considered in a slightly modified problem, it actually has stronger, conditional optimality guarantees. 
Consider any PI problem where $\calD_0$ is unrestricted, i.e.~it consists of all measurable mappings $\delta:\calY^n \to \calA$. 
In this case, as stated in Corollary \ref{cor:unrestricted}, the oracle PI rule is the usual Bayes rule under \eqref{eq:bayes-PI}, that is, in terms of the hypothetical variables $(\bYtil, \bthetatil)$ it minimizes not only the expectation of the loss $\EE [L(\bYtil, \bthetatil, \delta(\bYtil))]$, but also the expectation of the loss {\em conditionally} on $\bYtil$. 
In the original problem, where $\btheta$ is fixed, it is meaningless to condition on $\bY$ because this would remove all randomness from the problem, and the guarantees in Theorem \ref{thm:main} (and Corollary \ref{cor:unrestricted}) are indeed unconditional. 
Nevertheless, the oracle PI rule can be shown to minimize a {\em conditional} risk in the fixed-$\btheta$ setting, if we slightly modify the formulation of the problem. 
Thus, let $\bY_{(1:n)}:= (Y_{(1)}\leq \cdots \leq Y_{(n)})$ be the order statistic of $\bY$, and let $\btheta_{(1:n)} = (\theta_{(1)},...,\theta_{(n)})$ be the $\theta_i$ ordered accordingly; 
formally, if $\pi$ is the permutation that sorts $Y_{\pi(1)}\leq \cdots \leq Y_{\pi(n)}$, then $\theta_{(i)} := \theta_{\pi(i)}$. 
Note that, while $\theta_i$ are strictly fixed, $\theta_{(i)}$ are random (because $\pi$ is random). 
Extending the ideas proposed by \citet[][Section 6]{genovese2002operating} in a multiple testing context, restricting attention to PI rules in a PI problem is equivalent to operating on the {\em unlabeled} data, i.e., the statistician observes only the order statistics $Y_{(i)}$, and makes decisions regarding the corresponding (random) parameters $\theta_{(i)}$. 
We emphasize that the unlabeled viewpoint is justified precisely by the permutation invariance of the problem, since in a PI problem labels are immaterial. 
Now let $\delta$ be any decision rule in the unlabeled problem (not PI, because the unlabeled problem is not PI). 
Then it is easy to verify that, for any fixed $\bt = (t_1\leq \cdots \leq t_n)$, we have
$$
\begin{aligned}
\EE_{\btheta} [L(\bY_{(1:n)}, \btheta_{(1:n)}, \delta(\bY_{(1:n)})) \lvert \bY_{(1:n)}  = \bt] 
= 
\EE [L(\bYtil, \bthetatil, \delta(\bYtil)) \lvert \bYtil = \bt], 
\end{aligned}
$$
where $(\bYtil, \bthetatil)$ are the variables in the hypothetical Bayes model \eqref{eq:bayes-PI}. 
But the minimizer of the right hand side is the oracle PI rule in the original problem, 
 $\delta^*_{PI}(\bt) = \argmin_{a\in \calA}  \EE [L(\bYtil, \bthetatil, a)\lvert \bYtil=\bt]$. 
Hence, $\delta^*_{PI}(\bt)$ also minimizes the left hand side, i.e., $\delta^*_{PI}(\bY_{(1:n)})$ minimizes the risk in the unlabeled problem {\em conditionally} on $\bY_{(1:n)}$. 
Genovese and Wasserman \cite[][GW02 henceforth]{genovese2002operating} considered a very specific PI multiple testing problem where the same simple null is tested against the same simple alternative in each coordinate, under the loss $FNP + \lambda \cdot FDP$, and the $Y_i$ are p-values. 
In this case the (conditionally) optimal procedure rejects the $R = R(\bY_{(1:n)})$ hypotheses with smallest p-values, and GW02 also derive formulas to specify $R = R(\bY_{(1:n)})$, so their solution is even more explicit than our characterization of $\delta^*_{PI}$. 
On the other hand, the characterization of $\delta^*_{PI}$ in Corollary \ref{cor:unrestricted} applies in a much broader class of problems. 
In particular, for the multiple testing problem, if we move from a simple alternative to a composite alternative, the calculations of GW02 are inapplicable, whereas our Corollary \ref{cor:unrestricted} implies, by similar arguments to those in the second example of Section \ref{sec:application}, that the conditionally optimal procedure (for the same loss considered in GW02) rejects a subset of hypotheses for which the local-FDR statistic in Equation \eqref{eq:lfdr} is sufficiently small. 
Importantly, this is generally different from the procedure of Genovese and Wasserman which thresholds the p-value: the two procedures agree in their setup where both null and alternative are simple, but not in this more general setup.

\section{Application in some example problems}\label{sec:application}

In this section we specialize the general results to the examples of Section \ref{sec:unified}, when the model (and, hence, the entire problem) is PI. 
To remind, in those examples we reformulated some popular problems in simultaneous inference within the proposed unified framework. 
For instance, the version of the multiple testing problem presented in Section \ref{sec:unified}, corresponds to $K(\by,\btheta,\ba) = \operatorname{fdp}(\btheta, \ba)$ and $\bar{\bf \Theta} = \bTheta$ in \eqref{eq:d-0}, so $\calD_0$ is restricted to all procedures $\delta$ satisfying $\operatorname{FDR}(\btheta, \delta)\leq \alpha$ for {\em all} $\btheta\in \bTheta$; this is the standard requirement for Type I error control in the FDR problem, known as {\em strong control}. 
Before applying the results from the previous section to these examples, we propose a subtle but important modification, which applies when $\calD_0$ is restricted: 
\begin{center}
whenever $\bar{\bTheta} = \bTheta$ in \eqref{eq:d-0}, replace it by $\bar{\bTheta} = S_n(\btheta)$, 
\end{center}
the set of $n!$ (possibly nondistinct) permutations of the true $\btheta$. 
With this modification, the constraint 
$$
\EE K(\bYtilprime, \bthetatilprime, \delta(\bYtilprime)) \leq \alpha\ \ \forall \btheta'\in \bar{\bf \Theta}
$$ 
in \eqref{eq:optimization} becomes just $\EE K(\bYtil, \bthetatil, \delta(\bYtil)) \leq \alpha$, because the distribution of $(\bYtil, \bthetatil)$ is invariant to permutations of $\btheta$. 
Theorem \ref{thm:main} then says that the optimal PI rule is the minimizer of 
\begin{equation}\label{eq:optimization-mod}
\begin{aligned}
\displaystyle{\minimize_{\delta}} \quad &\EE L(\bYtil, \bthetatil, \delta(\bYtil))\\
\textup{s.t.} \quad &\EE K(\bYtil, \bthetatil, \delta(\bYtil)) \leq \alpha, 
\end{aligned}
\end{equation}
which is simpler to solve compared to \eqref{eq:optimization}, because the same pair $(\bYtil, \bthetatil)$ appears in both the objective and the constraint. 
The resulting oracle PI rule does solve a slightly different problem when our modification is employed. 
For example, in the multiple testing problem, for every $\btheta$ this will give the optimal PI testing procedure that controls the FDR only at the {\em true} value $\btheta$ (or permutations thereof), rather than the optimal PI procedure that controls FDR in the strong sense, as originally prescribed. 
Still, there should not really be an objection to the proposed modification, for several reasons: 
(i) the purpose of requiring validity of a procedure (for example type I error control in multiple testing) for all $\btheta\in \bTheta$, is anyway to ensure that it is valid at the unknown, true instance $\btheta$; 
(ii) the oracle PI rule is anyway optimal only at the {\em true} value $\btheta$ (or permutations thereof), even without our modification, since the objective in \eqref{eq:optimization} anyhow depends on the true $\btheta$; 
(iii) our modification only makes for a {\em stronger} oracle, in other words the corresponding  bound \eqref{eq:bound-PI} can only become smaller when taking $\bar{\bf \Theta} = S_n(\btheta)$ instead of $\bar{\bf \Theta} = \bTheta$. 

We return to the examples of Section \ref{sec:unified}, assuming for the rest of the section that the model \eqref{eq:model} itself is PI. 
For each of these examples, we characterize explicitly the oracle PI rule by applying Theorem \ref{thm:main}, incorporating the modification above when applicable. 

\begin{enumerate}[wide, labelwidth=!, labelindent=0pt] 
\item {\em Global null testing}. 
For the general formulation in Section \ref{sec:unified} to entail a PI problem, we need the null set $\boldsymbol{\Theta}_0$ to be PI, i.e., closed under permutations. 
For simplicity, we consider the most basic version of a point null hypothesis, $H_0: \theta_i=0 \ \text{for all }i=1,...,n$, i.e.~$\boldsymbol{\Theta}_0 = \{\boldsymbol{0}\}\subseteq \RR^n$, but the arguments can also be applied, for example, to testing $H_0: \theta_1=\theta_2=\cdots=\theta_n$ because this also corresponds to a PI set $\boldsymbol{\Theta}_0$. 
Theorem \ref{thm:main} says that $\delta^*_{PI}$ is the solution to 
\begin{equation*}
\begin{aligned}
\displaystyle{\minimize_{\delta}} \quad &\PP_{\btheta}(\delta(\bYtil)=0)\\
\textup{s.t.} \quad &\PP_{\btheta=\boldsymbol{0}}(\delta(\bYtil)=1)\leq \alpha. 
\end{aligned}
\end{equation*}
Recall that $\bYtil$ is a random vector obtained by mixing $\bY\sim f(\cdot; \btheta)$ according to the uniform distribution on all permutations of $\btheta$, so that the problem posed above in terms of $\bYtil$, is a {\em simple versus simple} testing problem in the usual Neyman--Pearson paradigm. 
We conclude that $\delta^*_{PI}$ is the corresponding likelihood ratio test, 
\begin{equation}\label{eq:oracle-global-testing}
\delta^*_{PI}(\bY) \!= \!
\begin{cases}
1, &\text{if }\tilde{\Lambda}(\bY)\leq c\\
0, &\text{otherwise}
\end{cases}, \qquad \tilde{\Lambda}(\tilde{\by}) = \frac{f_{\bzero}(\tilde{\by})}{\frac{1}{n!}\sum_{\tau\in S_n}f(\tilde{\by}; \btheta_\tau)},
\end{equation}
where $f_{\bzero}$ is the density of $\bYtil$ (also, of $\bY$) under $\btheta=\bzero$, and 
where $c$ is the largest constant s.t.~$\PP_{\btheta=\bzero}(\tilde{\Lambda}(\bY)\leq c)\leq \alpha$. 

\item {\em Multiple hypothesis testing}. 
The setup is identical to that presented in Section \ref{sec:unified}, except that, per our modification, $\calD_0$ is now the set of all $\delta:\calY^n\to \{0,1\}^n$ s.t.~$\operatorname{FDR}(\btheta, \delta) \leq \alpha$, i.e.~only at the true value of the parameter. 
Applying \eqref{eq:optimization-mod}, the oracle PI rule solves
\begin{equation}
\begin{aligned}
\displaystyle{\minimize_{\delta}} \quad &\EE L(\bthetatil, \delta(\bYtil)) \\
\textup{s.t.} \quad &\EE \operatorname{fdp}(\bthetatil, \delta(\bYtil))\leq \alpha, 
\end{aligned}
\end{equation}
where, consistent with Section \ref{sec:unified}, 
$$
L(\bthetatil, \delta(\bYtil)) = \sum_{i=1}^n \htil_i(1-\delta_i(\bYtil)),\qquad \quad
\operatorname{fdp}(\bthetatil, \delta(\bYtil)) = {\sum_{i=1}^n (1-\htil_i) \delta_i(\bYtil) \big/ \sum_{i=1}^n \delta_i(\bYtil)},
$$
for $\htil_i = \mathbbm{1}(\thetatil_i\notin \Theta_0)$. 
Thus, we want to minimize the expected number of false non-rejections subject to FDR control, but now the expectation is taken also w.r.t.~the random variables $\thetatil_i$. 
Specifically, recalling that $\bthetatil$ is a random permutation of $\btheta$, we have that $\tilde{\boldsymbol{h}} = (\htil_1,...,\htil_n)$ is a random permutation of $\boldsymbol{h} = (h_1,...,h_n)$. 
In particular, 
\begin{equation}\label{eq:htil}
\htil_i\sim \textnormal{Bernoulli}(\pi_1), 
\end{equation}
with $\pi_1=\pi_1(\btheta) = \frac{1}{n}\sum_{i=1}^n h_i$, giving rise to a version of the two-groups model \citep{efron2001empirical}, although the $\htil_i$'s above are only exchangeable random variables, as opposed to the standard two-groups model or even its extension in \cite[][HR21 henceforth]{heller2021optimal}, where the $\htil_i$'s are assumed independent. 
This framing allows us to leverage some existing results on optimal FDR controlling procedures under a two-groups model. 
Specifically, HR21 considered a `general' two-groups model, assuming \eqref{eq:htil} with independence, for a fixed and known $\pi_1$, but allowing the likelihood (the conditional distribution of $\bYtil$ given $\tilde{\boldsymbol{h}}$) to be an arbitrary joint distribution. 
Defining the local-FDR (lfdr) function for the $i$th hypothesis in this model as
\begin{equation}\label{eq:lfdr}
\begin{aligned}
\operatorname{lfdr}_i(\by) := \PP(\htil_i = 0\lvert \bYtil=\by) = \frac{\sum_{\tau\in S_n}\mathbbm{1}(\theta_{\tau(i)} \in \Theta_0)f(\by; \btheta_\tau)}{\sum_{\tau\in S_n} f(\by; \btheta_\tau)}, \qquad  i=1,...,n, 
\end{aligned}
\end{equation}
HR21 show in their Theorem 2.1 that the optimal multiple testing procedure rejects all hypotheses $H_{0i}$ for which $\operatorname{lfdr}_i(\bYtil)\leq \hat{t}$, where the threshold $\hat{t}$ is possibly data-dependent. 
They go on to finding the threshold $\hat{t}$ explicitly, realizing that this amounts to solving an infinite integer program where both the objective and the constraint are linear, and show that $\hat{t}$ is indeed a function of $\bYtil$. 
In our two-groups model the $\htil_i$'s are exchangeable, and the joint likelihood is $\bYtil\lvert \tilde{\boldsymbol{h}} \sim g(\bytil\lvert \tilde{\boldsymbol{h}}) = \sum_{\tau\in S_1\times S_0} f(\bytil; \btheta_\tau)$, where for each $\tilde{\boldsymbol{h}}$ we denote by $S_1\times S_0$ the set of permutations $\tau$ of $[n]$ such that $S_1 = \{i:\htil_i=1\}$ is mapped to itself and $S_0 = \{i:\htil_i=0\}$ is mapped to itself. 
Because the model is different, the expression for $\operatorname{lfdr}_i$ in \eqref{eq:lfdr} will not be the same as in HR21, nor do the $\operatorname{lfdr}_i$'s have the same dependence structure as in HR21. 
However, the proof of their Theorem 2.1 does not use the particular form of the function $\operatorname{lfdr}_i$, only its definition as $\PP(\htil_i = 0\lvert \bYtil)$, and does not involve the joint distribution of the $\operatorname{lfdr}_i$'s, so  the statement of the theorem actually continues to hold in our two-groups model. 
Moreover, the proof of Theorem 2.1 in HR21 in fact shows that rejecting all $H_{0i}$ with $\operatorname{lfdr}_i(\bYtil)\leq t$, for any fixed $t$, is the optimal procedure {\em conditionally} on $\bYtil$ at the conditional FDR level corresponding to $t$; this fact is relevant to our discussion in Section \ref{subsec:conditional}. 
To conclude, the oracle PI rule in our problem must have the form
$$
\delta^*_{PI, i}(\bY) = 
\begin{cases}
1,&\text{if}\ \operatorname{lfdr}_i(\bY) \leq \hat{t}\\
0,&\text{otherwise}
\end{cases},
$$
where $\hat{t}$ is possibly data-dependent. 
To find $\hat{t} = \hat{t}(\bY)$ explicitly is a matter for future investigation. 
A natural route will be to try extending the approach in HR21 to the case of exchangeable $\htil_i$'s; if this is successful, implementation is another question, and we suspect that calculating $\hat{t}$ exactly will anyhow be computationally costly due to the particular distribution of $\bthetatil$. 
To conclude this example, we emphasize that the results of HR21 apply in a genuine two-groups model, i.e.~when the $h_i$ are random to begin with. 
Here we have applied a basic result from HR21 to characterize the oracle PI testing procedure that controls the FDR---alternatively, the attainable upper bound on power of any PI procedure that controls the FDR---in a strictly {\em frequentist} problem. 
We also point out that the result here is different from the optimal procedure in \cite{rosset2022optimal}, where the setup entails a simple alternative in each coordinate, and a procedure with strong control of the FDR is sought. 

\item {\em Confidence intervals for selected parameters}. 
It is easy to verify that for $K(\by, \btheta, \ba) = \operatorname{fcp}(\by, \btheta, \ba)$ and the choice of $L(\by, \btheta, \ba)$ specified in Section \ref{sec:unified}, the problem is PI if the selection rule $s:\mathcal{Y}^n\to \{0,1\}^n$ is PI, meaning that $s(\by_\tau) = (s(\by))_\tau$ for every permutation $\tau$ and any $\by\in \calY^n$; we assume this throughout. 
The modification suggested above applies, so that by \eqref{eq:optimization-mod}, the oracle PI rule $\delta^*_{PI}$ solves
\begin{equation}\label{eq:optimization-mod-fcr}
\begin{aligned}
\displaystyle{\minimize_{\delta}} \quad &\EE L(\bthetatil, \delta(\bYtil)) \\
\textup{s.t.} \quad &\EE \operatorname{fcp}(\bthetatil, \delta(\bYtil))\leq \alpha, 
\end{aligned}
\end{equation}
where $L(\bthetatil, \delta(\bYtil)) = {\sum_{i=1}^n s_i(\bYtil) \mu(\delta_i(\bYtil))}\big/{\sum_{i=1}^n s_i(\bYtil)}$. 
Formally, the oracle PI rule in this case is degenerate: remembering that $\bthetatil$ is discrete, if we take $\delta_i(\by)\equiv \{\theta_j: 1\leq j\leq n\}$, the support of $\thetatil_i$, then, trivially, this is a PI rule with $\operatorname{FCR}(\btheta, \delta)=0$ and $R(\btheta, \delta^*)=0$, since $\{\theta_j: 1\leq j\leq n\}$ has Lebesgue measure zero. 
To obtain a meaningful problem, we first state a result in a separate but related Bayesian framework, replacing the actual (discrete) distribution of $\bthetatil$ with some continuous distribution, then explain its relevance to the problem under consideration. 

\begin{proposition}\label{prop:fcr}
Consider minimizing \eqref{eq:optimization-mod} but now with $\bthetatil$ and $\bYtil$ jointly distributed s.t.~$\bthetatil\sim g(\bthetatil), \bYtil|\bthetatil\sim f(\cdot; \bthetatil)$ where $g$ is some Lebesgue density on $\RR^n$, and $f(\cdot; \bthetatil), \btheta\in \bTheta$, is the original parametric family \eqref{eq:model}. 
Then the minimizer, say $\delta^g$, has the property that $\delta^g_i(\by)$ is a highest posterior density (h.p.d.) region of some level for $\thetatil_i$. 
\end{proposition}

\begin{proof}
Let $\delta$ be a PI rule such that $\delta_i(\bY)\subseteq \RR$ is a set estimate for $\theta_i$ (with $\delta_i(\bY)=\emptyset$ if $s_i(\bY)=0$). 
We will show that $\delta$ can be improved by a procedure that constructs h.p.d.~regions w.r.t.~$g$. 
Fix any $\by\in \calY^n$ s.t.~$s_i(\by)=1$ for at least one $i$. 
For all $i$ s.t.~$s_i(\by)=1$, define 
$$
\begin{aligned}
\alpha_i(\by) := \PP_g(\thetatil_i\notin \delta_i(\by)\lvert \bYtil = \by) =
 \frac{\int \mathbbm{1}(\theta_i\notin \delta_i(\by)) g(\btheta)f(\by;\btheta) d\btheta}{\int g(\btheta)f(\by;\btheta) d\btheta}, 
\end{aligned}
$$
the posterior probability that $\delta_i(\by)$ does not cover $\thetatil_i$, and for each $i$ s.t.~$s_i(\by)=0$ set $\alpha_i(\by) = 0$ (an arbitrary value). 
Denote $\alpha(\by) := (\alpha_1(\by),...,\alpha_n(\by))$. 
Now take $\delta'$ to be the procedure that constructs for each selected parameter h.p.d.~region of level $1-\alpha_i(\by)$ for $\thetatil_i$, namely, 
$\delta'_i(\by) = \{\theta\in \Theta: g_i(\theta | \by)>t(\by)\}$ if $s_i(\by)=1$, where $g_i(\theta | \by)$ is the posterior density of $\thetatil_i$ given $\bYtil=\by$ evaluated at $\theta$, and where $t(\by)$ is s.t.~$\PP_g(\thetatil_i\notin \delta'_i(\by)\lvert \bYtil=\by)$. 
This is possible since the posterior of $\thetatil_i$ is continuous. 
Then 
\begin{align*}
\EE_g \big[ \operatorname{fcp}(\bYtil, \bthetatil, \delta'(\bYtil)) \big| \bYtil=\by \big]  
&=\EE_g \left[ \frac{\sum_{i=1}^n s_i(\bYtil) \mathbbm{1}(\thetatil_i \notin \delta'(\bYtil))}{\sum_{i=1}^n s_i(\bYtil)} \bigg| \bYtil\!=\by \right] \\[1ex]
&=\frac{\sum_{i=1}^n \! s_i(\by) \PP_g(\thetatil_i \notin \delta'(\by)\lvert \bYtil \! =\by)}{\sum_{i=1}^n s_i(\by)} = 
\frac{\sum_{i=1}^n \! s_i(\by) \alpha_i(\by)}{\sum_{i=1}^n s_i(\by)}, 
\end{align*}
which, by the same calculation applied to $\delta$, equals $\EE_g \big[ \operatorname{fcp}(\bYtil, \bthetatil, \delta(\bYtil)) \big| \bYtil=\by \big]$. 
This implies 
$
\EE_g \operatorname{fcp}(\bthetatil, \delta'(\bYtil)) = \EE_g \operatorname{fcp}(\bthetatil, \delta(\bYtil))
$
by taking expectation over $\bYtil$. 
Meanwhile, $\mu(\delta'_i(\by)) \leq \mu(\delta_i(\by))$ for every $\by$, because $\delta'_i(\by)$ is a h.p.d.~region of the same posterior level as $\delta_i(\by)$. 
Therefore $\EE_g L(\bthetatil, \delta'(\bYtil) \leq \EE_g L(\bthetatil, \delta(\bYtil)$. 
In conclusion, $\delta'$ has the same expected FCP as $\delta$ but smaller expected loss. 
\end{proof}


The proposition above does not apply directly to the original problem because the actual distribution of $\bthetatil$ is not continuous, but it is still has relevant implications. 
As remarked before, the oracle PI rule in the original problem formally has risk zero, which is not useful. 
By contrast, if $n$ is large and the empirical distribution of the fixed $\theta_i$'s is `dense', so that that the actual distribution of $\bthetatil$ (uniform on all permutations of $\btheta$) is well approximated by a continuous distribution with density $g^*_n$, Proposition \ref{prop:fcr} applied to $g^*_n$ already characterizes a nontrivial rule, whose risk can be used informally as a benchmark. 
Perhaps more importantly, the proposition above is informative from the algorithmic perspective, discussed in later sections, because it suggests that, in order to construct a good procedure, we should threshold {\em estimates} of $\PP(\thetatil_i \lvert \bYtil = \by)$. 
In producing such estimates it is common to model $\thetatil_i$ as realizations from an (unknown) continuous distribution, \citep[e.g.][]{efron2016empirical}, in which case the posterior is also continuous and Proposition \ref{prop:fcr} applies. 

\item {\em Effect-size estimation for selected parameters}. 
As in the previous item, we assume that the selection rule $s:\mathcal{Y}^n\to \{0,1\}^n$ is PI, in which case the entire problem is PI. 
Because $\calD_0$ here is unrestricted, i.e.~it includes all estimators $\delta:\calY^n\to \bTheta$, Corollary \ref{cor:unrestricted} applies. 
The selective posterior expected loss is 
$$
\begin{aligned}
\rho(\by,\ba) = \EE \big[\sum_{i=1}^n s_i(\by)(a_i - \thetatil_i)^2 \lvert \bYtil = \by \big] =\sum_{\{i:\ s_i(\by)=1\}} \!\!\!\! \EE \big[ (a_i - \thetatil_i)^2 \lvert \bYtil = \by \big]. 
\end{aligned}
$$
and the oracle PI rule is the minimizer of $\rho(\by,\ba)$ in $\ba$, namely, for every $i$ with $s_i(\by)=1$, 
\begin{equation}\label{eq:ol-PI-selective-estimation}
\delta_{PI, i}^*(\by) = \EE(\thetatil_i\lvert \bYtil=\by)
= \frac{\sum_{\tau\in S_n } \theta_{\tau(i)} f(\by; \btheta_{\tau})}{\sum_{\tau\in S_n }  f(\by; \btheta_{\tau})}
\end{equation}
(and the coordinates $i$ for which $s_i(\by)=0$ can be set arbitrarily). 

Another example that fits our framework is (a finite-alphabet version of) that considered in \cite{kontorovich2024distribution}. 
Here $\bY \sim \text{Multinomial}(N; \btheta)$, and of interest is to estimate $\theta_{I^*}, I^* \in \argmax_{i\leq n}Y_i$ (it is possible that the maximum count is attained by two or more $Y_i$'s). 
In the formulation from Section \ref{sec:unified} this corresponds to a selection rule that sets $s_i(\bY) = 1$ if $Y_i = \max(Y_1,..,Y_n)$ and $s_i(\bY) = 0$ otherwise. 
This is a PI problem for any choice of a loss function for the selected parameter $\theta_{I^*}$, e.g.~we could take the log-loss $-\theta_{I^*}\log \delta(\bY) - (1-\theta_{I^*})\log(1-\delta(\bY))$. 
Our general result says that the optimal PI estimator $\delta^*_{PI}(\by)$ is the minimizer, in $a$, of the selective posterior expected loss $\rho(\by,a)$. 
We emphasize that $i^* = I^*(\by)$  is deterministic here (because we condition on $\by$). 
\end{enumerate}

\section{Asymptotic equivalence}\label{sec:attainability}

Section \ref{subsec:comp} offered an approach for bypassing the computational cost of the bound \eqref{eq:bound-PI} using a deterministic asymptotic approximation for permanents of doubly stochastic matrices. 
The current section offers another direction for asymptotically approximating the greatest lower bound in some PI problems, by extending results on convergence of the risk of a weaker but computationally tractable oracle to $R(\btheta, \delta^*_{PI})$. 
For compound decision problems, as reviewed in Section \ref{sec:intro}, previous work has already established asymptotic equivalence of the risks of the PI and simple oracles. 
We now extends this by showing that in some PI problems {\em beyond} compound decision, this asymptotic equivalence continues to hold. 
Specifically, we consider a PI problem of estimating the (single) parameter corresponding to the maximum observation $Y_i$ under squared loss. 

Assume that the observed data follows the model \eqref{eq:model-cd}, with the parameters $\theta_i\in \Theta$ fixed, and let $f_i\coloneqq f(\cdot; \theta_i)$. 
We denote by $F_i \coloneqq F_{\theta_i}$
 the corresponding cumulative distribution functions. 
As in the previous sections, $\bY = (Y_1,...,Y_n)$, $\btheta = (\theta_1,...,\theta_n)$. 
Let $Y^*\coloneqq \max_{i\leq n}Y_i$ be the maximum of the observations. 
Also, let $I^* = I^*(\bY) \coloneqq \argmax_{i\leq n}Y_i$ the (random) index achieving the maximum, and let $i^*= i^*(\by)$ be its realized value. 
Now consider the problem of estimating $\theta_{I^*}$, the parameter of the maximum of the $Y_i$'s.  
Under squared loss, for any estimator $\delta(\bY)$ 
of $\theta_{I^*}$ the risk is 
\begin{equation*}
R_n(\btheta, \delta) \coloneqq \EE_{\btheta}(\delta(\bY)-\theta_{I^*})^2. 
\end{equation*}
The problem is PI per our earlier definitions, so the general theory developed above applies. 
Let $(\bthetatil, \bYtil)$ be jointly distributed according to 
\begin{equation}\label{eq:bayes}
\begin{aligned}
\tau \sim \text{uniform on }S_n, \qquad
(\thetatil_i, \Ytil_i)\lvert \tau 
\overset{ind}{\sim} (\theta_{\tau(i)}, Y_{\tau(i)}), 
\end{aligned}
\end{equation}
which is \eqref{eq:bayes-PI} under the independent model \eqref{eq:model-cd}. 
Define $\Ytil^*\coloneqq \max_{i\leq n}\Ytil_i$ and $\tilde{I}^*\coloneqq \argmax_{i\leq n}\Ytil_i$, the counterparts of $Y^*$ and $I^*$. 
Then the oracle PI estimator is 
$$
\delta_{PI}^*(\by) = \EE(\thetatil_{i^*}|\bYtil=\by). 
$$
Define also the oracle {\em simple} estimator by 
$$
\delta^*_{S}(\by) \coloneqq \EE(\thetatil_{i^*} \lvert \Ytil_{\tilde{I}^*} = y_{i^*}). 
$$
Note that $\delta_{PI}^*$ is the Bayes rule when the vector $(\tau(1),...,\allowbreak \tau(n))$ is uniformly distributed over the $n!$ permutations of $(1,...,n)$, whereas $\delta^*_{S}$ is the Bayes rule when $\tau(1),...,\tau(n)$ are i.i.d.~uniform draws from $\{1,...,n\}$. 
Indeed, in the latter case the conditional distribution of $\thetatil_{i^*}$ given $\bYtil=\by$ is the same as the conditional distribution of $\thetatil_{i^*}$ given $\Ytil_{i^*}=y_{i^*}$ since the pairs $(\tau(i), Y_{\tau(i)}), i\leq n$, are independent. 

We make the following assumptions. 
For some constant $C<\infty$:
\begin{enumerate}[start=1,label={(\bfseries A\arabic*)}]
\item It holds that $\max_{i\leq n} |\theta_i|<C$, and 
$$
\max_{i,j\leq n}\EE_{\theta_i}( (f(Y_i; \theta_j))^2/(f(Y_i; \theta_i))^2 )<C. 
$$ 
Also, there is some constant $\gamma>0$ for which $\min_{i,j\leq n}\PP_{\theta_i}( f(Y_i; \theta_j)/f(Y_i; \theta_i) > \gamma) \geq 1/2$ \label{assump1}
\item Defining 
$$
p_j(Y_i)\coloneqq \frac{f_j(Y_i)}{\sum_{k=1}^n f_k(Y_i)},\quad  i,j=1,...,n, 
$$
there is a constant $r>1$ such that, for all $i=1,...,n$, 
$$
\begin{aligned}
&\EE_{\theta_i} \bigg(\sum_{j=1}^n (p_j(Y_i))^r\bigg)^2 < \frac{C}{n^r},  \!\! \qquad \!\! \EE_{\theta_i} \bigg( \frac{1}{n\min_k p_k(Y_i)}\bigg)^r < C, \qquad \!\! \EE_{\theta_i} \bigg( 
\frac{\sum_{j=1}^n (p_j(Y_i))^2}{n\min_k p_k(Y_i)}
\bigg)^r < \frac{C}{n^r}
\end{aligned}
$$
\label{assump2}

\item for any fixed $m$, there is a constant $\tilde{C}$ and some $n_0$, s.t.~for all $n\geq n_0$, 
$$
\EE_{\btheta} 
\bigg(\frac{1}{\min_{k\leq n} F_k(Y^*)}\bigg)^m < \tilde{C}. 
$$
\label{assump3}
\end{enumerate}

The assumptions above are adaptations of the ones in \cite[][GR09 henceforth]{greenshtein2009asymptotic} for the selective estimation probelm, and quantify a sense in which the $\theta_i$, as well as their corresponding distributions, are not too far from each other. 
Assumption \ref{assump1} is exactly Assumption (G1) in Theorem 5.1 of GR09. 
Assumption \ref{assump2} is a counterpart of (G2) in GR09, which is consistent with requiring that the terms in \ref{assump2}, without the outer squares, are bounded in expectation. 
Assumption \ref{assump3} is an additional condition we need when the parameter of interest is chosen post-hoc. 

The assumptions \ref{assump1}-\ref{assump3} are satisfied for the bounded normal model, $Y_i\sim \calN(\theta_i, 1)$, independent, $i=1,...,n$, with $|\theta_i|\leq A<\infty$. 
For example, a calculation in GR09 shows that under this model, 
$$
\sum_{j=1}^n (p_j(Y_i))^2 = 
 \frac{\sum_{j=1}^n f_j^2(Y_i)}{(\sum_{k=1}^n f_k(Y_i))^2} \leq \frac{1}{n}e^{4A|Y_i|}. 
$$
Therefore, 
$$
\begin{aligned}
\EE_{\theta_i} \bigg[ \bigg( \sum_{j=1}^n (p_j(Y_i))^2 \bigg)^r \bigg] \leq \frac{1}{n^r} \EE_{\theta_i}e^{4Ar|Y_i|} 
\leq \frac{2}{n^r}e^{A^2(8r^2+4r)}, 
\end{aligned}
$$
using the fact that 
$$
\EE e^{t|Y_i|} = e^{t^2/2 + \theta_i t} + e^{t^2/2 - \theta_i t}\leq 2e^{t^2/2 + |\theta_i t|}
$$ 
for $Y_i\sim \calN(\theta_i, 1)$. 
The bounds on the other terms in \ref{assump2} can be shown with similar calculations. 
For (A3), 
$$
\begin{aligned}
\EE_{\btheta} \bigg( \frac{1}{(\min_k F_k(Y^*))^m } 1(Y^*< t)\bigg) \leq 
 \EE \bigg( \frac{1}{\Phi^m(X^*-A) }1(X^*< t)\bigg)
\end{aligned}
$$
for any fixed $t<0$, 
where $X^*\coloneqq \max_{i\leq n}X_i$ for $X_i\sim \calN(-A, 1)$, i.i.d. 
The right hand side is finite for large enough $n$, which can be shown, e.g., by writing out the integral with the density $f^*(u)= n\Phi^{n-1}(u+A)\phi(u+A)$ of $X^*$, and invoking elementary normal tail bounds, $z(z^2+1)^{-1}\phi(z)\leq \Phi(-z)\leq z^{-1}\phi(z)$ for $z>0$. 

The following theorem implies that the risks of the simple oracle and the PI oracle are asymptotically equal under the assumptions stated above. 
Our proof, included in the Appendix, 
is an adaptation of the proof of Theorem 5.1 in GR09, which asserts that the difference between $\EE_{\btheta} \sum_i (\hat{\theta}_i^{S} - \theta_i)^2$ and $\EE_{\btheta} \sum_i (\hat{\theta}_i^{PI} - \theta_i)^2$, where $\hat{\theta}_i^{S} \coloneqq \EE(\thetatil_i | \Ytil_i)$ and $\hat{\theta}_i^{PI} \coloneqq \EE(\thetatil_i | \bYtil)$ for a fixed $i$, is $O(1)$ under their assumptions. 
We note that from GR09's Theorem 5.1 it follows trivially that the difference in risks in our problem 
is $O(1)$, since the squared error for $\theta_{I^*}$ is bounded above by the sum of the errors for all $\theta_i$'s. 
We are interested in proving that the difference $\to 0$, which requires nontrivial modifications. 

\begin{theorem}\label{thm:convergence}
Under assumptions \ref{assump1}-\ref{assump3}, we have 
$$
R_n(\btheta, \delta^*_{S}) - R_n(\btheta, \delta^*_{PI}) = O(n^{-(r-1)/r}), 
$$
where $r$ is the constant from Assumption \ref{assump2}. 
\end{theorem}


Because in the bounded means model \ref{assump2} is satisfied for any $r>1$, we have 

\begin{corollary}
In the bounded normal model,  $Y_i\sim \calN(\theta_i, 1)$, independent, $i=1,...,n$, with $|\theta_i|\leq A<\infty$, we have 
$$
R_n(\btheta, \delta^*_{S}) - R_n(\btheta, \delta^*_{PI}) = O(n^{-\alpha})
$$
for any $0<\alpha<1$. 
\end{corollary}

\section{Nuisance Parameters}\label{sec:nuisance}

Before concluding we informally discuss an extension to the case where the model involves a nuisance parameter. 
Generalizing \eqref{eq:model}, suppose 
\begin{equation}\label{eq:model-nuisance}
\bY\sim f(\cdot; \btheta, \psi), 
\end{equation}
where $\btheta \in \bTheta\subseteq \Theta^n$, the unknown parameter of interest, and $\psi\in \Psi$, an unknown nuisance parameter, index a distribution on $\calY^n$. 
For simplicity, we think of $\psi$ here as one-dimensional, but the discussion that follows applies more generally when $\psi$ is $p$-dimensional with $p$ fixed (independent of $n$). 
We can easily extend the first part of Definition \ref{def:pi} to apply to this more general situation, by saying that the model \eqref{eq:model-nuisance} is PI, if it is PI per Definition \ref{def:pi} when $\psi = \psi_0$ is known. 
For example, suppose $\bY\sim \calN_n(\btheta, \sigma^2\Sigma)$ with $\Sigma$ known, so the parameter is $(\btheta, \sigma^2)\in \RR^n\times \RR_+$. 
To check if the model is PI, consider $\sigma^2 = \sigma^2_0$ as known, so now the model is formally $\bY\sim \calN_n(\btheta, \sigma^2_0\Sigma), \btheta\in \RR^n$. 
For any permutation $\tau\in S_n$ we have $\bY_\tau\sim f(\btheta_\tau; \sigma^2_0\Sigma_{\tau})$ where $(\Sigma_\tau)_{ij} := \Sigma_{\tau(i),\tau(j)}$. 
Thus, if, for example, $\Sigma_{ii} = 1$ and $\Sigma_{ij} = \rho$ for $i\neq j$, the model with known $\sigma^2 = \sigma^2_0$ is PI per Definition \ref{def:pi}. 
The conditions in Definition \ref{def:pi} on the loss and the feasible set $\calD_0$ are also checked when treating $\psi=\psi_0$ as known. 
For any PI problem in this extended sense, any decision rule $\delta$ (which is not a function of $\psi$) is similarly called PI if it is PI per the original definition when $\psi=\psi_0$ is known. 
The oracle PI rule is defined $\delta_{PI}^* := \argmin_{\delta\in \calD_0^{PI}} R((\btheta, \psi), \delta)$, i.e., it is the same oracle as in the known $\psi$ case, only now the rule $\delta_{PI}^*$ itself depends on $(\btheta, \psi)$ instead of just $\btheta$. 
In analogy to \eqref{eq:bound-PI}, the greatest lower bound on the risk of a PI rule is $R((\btheta, \psi), \delta^*_{PI})$, which, again, is now a function of both $\btheta$ and $\psi$. 
To summarize, the definition of a PI problem, a PI rule and the oracle PI rule extend with no essential modifications to the case with nuisance parameters (basically, the extended definition requires permutation invariance w.r.t.~$\btheta$ only). 
Thus, Sections \ref{sec:intro}-\ref{sec:attainability} are easily extendable to the case of nuisance parameters; this should not be surprising because, from the oracle perspective, $\psi$ is known. 

More interesting is the question of {\em uniform} attainability of the bound in the presence of a nuisance parameter. 
If we limit attention to the independent model, $Y_i\stackrel{ind}{\sim} f(\cdot; \theta_i, \psi)$, then, inspired by the results from compound decision theory and their extension in Section \ref{sec:attainability} for the case of no nuisance parameters, the task of asymptotically competing with the oracle would now amount to estimating $(G,\psi)$ under the assumption $\theta_i\sim G$, i.i.d., for an unknown, arbitrary $G$. 
But this model may not even be identifiable, for example if $Y_i\stackrel{ind}{\sim} \calN(\theta_i, \sigma^2)$ with $\theta_i\sim G$, there exist different pairs $(G,\sigma)$ yielding the same distribution on $\bY$ (even if it is known that $\sigma>0$). 
However, if we consider a model with 
replicates, 
\begin{equation}\label{eq:model-rep}
Y_{ij}\stackrel{ind}{\sim} f(\cdot; \theta_i, \psi),\ \ \ \ \ \ \ \ i=1,...,n,\ \ \ \ j=1,...,K, 
\end{equation}
where $K$ is fixed (independent of $n$), 
then $(G, \psi)$ is now obviously identifiable under the model that postulates $\theta_i\sim G$, i.i.d. 
The likelihood for the individual coordinates $(Y_{11},...,Y_{1K}, ..., Y_{n1}..., Y_{nK})$ is not PI, but the likelihood for the `blocks' $(\bY_1,...,\bY_n)$, where
$
\bY_{i} := (Y_{i1},...,Y_{iK}), 
$
is PI. 
Moreover, results in \cite{kiefer1956consistency} generally imply that, if we assume \eqref{eq:model-rep} and $\theta_i\sim G$, i.i.d., then $G$ and $\psi$ can be {\em jointly}  consistently estimated by maximum likelihood, 
$$
(\hat{G}, \hat{\psi}) = \argmax_{G, \psi} \prod_{i=1}^n f_{G, \psi}(Y_i), 
$$
where 
$$
f_{G, \psi}(\cdot):= \int f(\cdot;\theta, \psi)G(d\theta), 
$$
and the maximization in $G$ is over all possible distributions on $\Theta$. 
Under suitable conditions, then, we expect the plug-in EB estimator $\delta^{\hat{G},\hat{\psi}}$ to be consistent for $\delta^{G,\psi}$. 
In other words, the results of Kiefer and Wolfowitz suggest that, under \eqref{eq:model-rep}, we do not need to know $\psi$ in order to estimate the oracle simple rule $\delta_S^{G,\psi}$ consistently. 

\section{Discussion}\label{sec:discussion}

Compound decision theory, originating in the work of Robbins, is unusually appealing in its conclusiveness regarding the availability of optimal solutions. 
Indeed, in any compound decision problem, (i) restricting attention to PI rules is natural; 
(ii) the oracle PI rule, i.e.~the minimizer of the risk in that class, is the Bayes rule under a uniform prior on all permutations of $\btheta$; 
(iii) the risk of the oracle PI rule is asymptotically equal to the risk of the oracle simple rule, which replaces the aforementioned prior by an i.i.d.~prior with the same marginals, and, in addition, 
(iv) the risk of the oracle simple (and hence also of the oracle PI) rule is asymptotically attainable uniformly in $\btheta$ by an empirical Bayes rule. 
The current work extends some of the properties above to a much broader class of PI problems, where the target parameter is even allowed to depend on the data: our investigation begins with observing that (i) above naturally extends to this class of PI problems; we then extend (ii) to any such PI problem, which identifies explicitly the  ultimate lower bound on the risk for any fixed $n$; and, further, we extend (iii) to a non-compound PI problem, which  has important implications on computation of the bound---and hence its usefulness---for large $n$.


Proving that also property (iv) extends beyond compound decision, would add much appeal from the algorithmic viewpoint, as it would demonstrate that the bound is asymptotically attainable uniformly in $\btheta$ by a legal, empirical Bayes procedure. 
This would establish that Robbins’s EB approach is asymptotically instance-optimal not only in compound decision problems (for which it was originally developed) but also in some PI problems. 
Such a result is not presented in the current article, and deserves separate consideration because the situation is much more subtle when moving from compound decision to the general PI class considered here: whereas for compound decision problems the specific structure of the loss in  \eqref{eq:loss-sum}, in particular the normalizing factor $n^{-1}$, enables to obtain  convergence results, there is much more flexibility in the loss function when considering a general PI problem. 
For example, the loss considered in the selective estimation problem of Section \ref{sec:attainability} is markedly different from a compound loss. 
Thus, the feasibility of attaining the simple oracle risk with an EB decision rule is expected to depend strongly on the particular loss under consideration. 

Other remaining challenges include finding good deterministic approximations to \eqref{eq:bound-PI}, 
especially since our findings in Section \ref{subsec:comp} show discrepancy between the results from MCMC approximations and the proposed approximation based on Mccullagh's formula. 
The issues with using the latter, reported in Section \ref{subsec:comp}, impede calculation of the required lower bound, but perhaps a more accurate version of Mccullagh's formula (for example, if higher-order terms can be obtained) can alleviate the problem. 
Approximation of the bound beyond the independent model \eqref{eq:model-cd} is a further challenge. 
An important direction for future work is checking existence of and conditions for the asymptotic convergence of Section \ref{sec:attainability} in other PI problems. 
Of course, this would also have implications on approximations of the bound \eqref{eq:bound-PI}, because when asymptotic convergence holds, it is enough to use the (easily computable) risk of the simple oracle for sufficiently large $n$. 
Another interesting direction to investigate is whether some ideas of the theory developed here extend to problems that are not PI. 
A notable example is inference in linear model, $\bY = \bX\boldsymbol{\beta} + \boldsymbol{\epsilon}$, say with $\bX$ a fixed $n\times p$ matrix and $\boldsymbol{\epsilon}$ a normally distributed noise term. 
In various statistical tasks (estimation, variable selection, etc.), it is common to employ statistical procedures that first compute an importance statistic $T_j(\bY)$ for each of the $p$ columns of $\bX$, then process $T_1(\bY),...,T_p(\bY)$ in a way that depends only on their values, not the labels $1,...,p$. 
Thus, while the {\em model} for the statistics $T_j(\bY)$ will generally not be PI, when choosing a procedure we often (voluntarily) restrict ourselves to a PI {\em rule}. 
This observation can lead to a meaningful lower bound by associating each value of $\boldsymbol{\beta}$ with the oracle PI decision rule, i.e., the rule that minimizes the risk at $\boldsymbol{\beta}$ among all rules that ignore the labels. 

\section*{Acknowledgments}
This work was supported by the Israeli Science Foundation (ISF) under grant no.~2679/24. 
I am indebted to Daniel Xiang for insightful discussions, and especially for educating me on the ideas underlying Section \ref{subsec:comp}. 
I am also  grateful to Eitan Greenshtein for his comments on an earlier version of this draft.

\begin{appendix}
\section{Proofs}\label{appx:proofs}

\begin{proof}[Proof of Theorem \ref{thm:main}]
To simplify notation, instead of introducing the random vectors $(\bthetatil, \bYtil)$, we will distinguish between the original (frequentist) model for $\bY$, 
\begin{equation}\label{eq:appx-model}
Y_i\overset{ind}{\sim} f_i,\ \ \  \ \ \ \ \  i=1,...,n, 
\end{equation}
and the two-level (Bayes) model for $\bY$, 
\begin{equation}\label{eq:appx-bayes}
\tau \sim \text{uniform on }S_n;\ \ \ \ \ \ \ \ \ \ \ Y_i|\tau \overset{ind}{\sim} f_{\tau(i)},\ i=1,...,n, 
\end{equation}
by using a subscript $\btheta$ on operations taken under \eqref{eq:appx-model}, e.g.~$\EE_{\btheta}$, and using no subscript at all for operations taken under \eqref{eq:appx-bayes}. 
Notice that \eqref{eq:appx-bayes} and \eqref{eq:bayes-PI} are equivalent, except that in the former $\bthetatil = \btheta_{\tau}$ is not defined explicitly. 
Correspondingly, the PI oracle is 
$$
\delta^*_{PI}(\by) = \EE(\theta_{\tau(i^*)}|\bY=\by), 
$$
and the simple oracle is 
$$
\delta^*_{S}(\by) = \EE(\theta_{\tau(i^*)} | Y_{I^*}= y_{i^*}), 
$$
remembering that $i^* = i^*(\by)$. 
In what follows, $\bY_{-i}$ denotes the vector obtained from $\bY$ by dropping $Y_i$. 

First, as in GR09, we note that the analysis can be done under the Bayes model \eqref{eq:appx-bayes}, in particular we can replace the risks by the Bayes risks, because both oracles considered are permutation invariant. 
Moreover, using the fact that $\EE(\EE[(\theta_{\tau(I^*)}|\bY) - \theta_{\tau(I^*)})| \bY]=0$, 
\begin{align*}
R_n(\btheta, \delta^*_S) = \EE (\delta^*_S(\bY) - \theta_{\tau(I^*)})^2 
&= \EE \EE [(\delta^*_S(\bY) - \EE(\theta_{\tau(I^*)}|\bY) +  \EE(\theta_{\tau(I^*)}|\bY) - \theta_{\tau(I^*)})^2 | \bY] \\
&= \EE \EE [(\delta^*_S(\bY) - \EE(\theta_{\tau(I^*)}|\bY))^2 + (\EE(\theta_{\tau(I^*)}|\bY) - \theta_{\tau(I^*)})^2 | \bY] \\
&= \EE [(\delta^*_S(\bY) \! -  \! \EE(\theta_{\tau(I^*)}|\bY))^2  \! +  \! (\EE(\theta_{\tau(I^*)}|\bY)  \! - \theta_{\tau(I^*)})^2 ] \\
&= \EE (\delta^*_S(\bY) - \delta^*_{PI}(\bY))^2 + R_n(\btheta, \delta^*_{PI}), 
\end{align*}
hence to show the claim is equivalent to showing $\EE (\delta^*_S(\bY) - \delta^*_{PI}(\bY))^2 =  O(n^{-(r-1)/r})$. 
Write the PI and simple oracles explicitly, 
$$
\begin{aligned}
\delta^S(\bY) \coloneqq \EE(\theta_{\tau(I^*)}|Y_{I^*}) =\sum_j\theta_j\PP(\tau(I^*) = j|Y_{I^*}) 
 = \sum_j\theta_j\PP(\tau(I^*) = j|Y_j, I^*=j)
\end{aligned}
$$
and 
$$
\begin{aligned}
\delta^{PI}(\bY) \coloneqq \EE(\theta_{\tau(I^*)}|\bY) &= \sum_j\theta_j\PP(\tau(I^*) = j|\bY) 
= \sum_j\theta_j\PP(\tau(I^*) = j|Y_j, I^*=j) \\
&= \sum_j\theta_j\PP(\tau(I^*) = j|Y_j, I^*=j) \frac{f(\bY_{-j}|Y_j, I^*=j, \tau(j)=j)}{f(\bY_{-j}|Y_j, I^*=j)}. 
\end{aligned}
$$
Let 
$$
p_j^*(Y_j)\coloneqq \PP(\tau(j)=j|Y_j, I^*=j)
$$
and denote by 
$$
\gjzerostar(\bY_{-j}|Y_j) \coloneqq f(\bY_{-j}|Y_j, I^*=j, \tau(j)=j)
$$
the density, under \eqref{eq:appx-bayes}, of $\bY_{-j}$ conditional on $Y_j$, $I^*=j$ and $\tau(j)=j$, and by
$$
g_1^*(\bY_{-j}|Y_j) \coloneqq f(\bY_{-j}|Y_j, I^*=j)
$$
the density under \eqref{eq:appx-bayes} of $\bY_{-j}$ conditional on $Y_j$ and $I^*=j$ (note that the latter does not depend on $j$, only on $Y_j$, because of exchangeability under \eqref{eq:appx-bayes}). 

Putting 
$$
W_j^*(\bY_{-j}, Y_j) \coloneqq \frac{\gjzerostar(\bY_{-j}|Y_j)}{g_1^*(\bY_{-j}|Y_j)}, 
$$
we then have 
$$
\delta^*_S(\bY) - \delta^*_{PI}(\bY) = \sum_{j=1}^n \theta_j p_j^*(Y_j) \bigg( W_j^*(\bY_{-j}, Y_j) - 1 \bigg). 
$$
Define two other densities on $\bY_{-j}$, conditional on $Y_j$ and $I^*=j$, by 
\begin{align*}
&\gjzerostartilde(\bY_{-j}|Y_j) \coloneqq \prod_{i\neq j} f_i^*(Y_i)\\
&\gjonestartilde(\bY_{-j}|Y_j) \coloneqq \gjzerostartilde(\bY_{-j}|Y_j) \bigg( \sum_{k\neq j} p_k^*(Y_j)\frac{f_j^*(Y_j)}{f_k^*(Y_j)} + p_j^*(Y_j) \bigg), 
\end{align*}
where 
$$
f_i^*(Y_i) \coloneqq f_{i}(Y_i|Y_j, i^*=j) = \frac{f_i(Y_i)}{F_i(Y_j)} \bfone(Y_i\leq Y_j)
$$
is the conditional density, under the original model \eqref{eq:appx-model}, of $Y_i$ given $Y_j$ and that the maximum is achieved by the $j$th coordinate. 
Note that $\gjzerostar$ and $g_1^*$ are obtained from $\gjzerostartilde$ and $\gjonestartilde$, respectively, by applying a random permutation (to $\bY_{-j}$). 
Hence, invoking Lemma 2.1 from GR09, we have
$$
\begin{aligned}
\EE \big(
|W_j^*(\bYnotj, Y_j) - 1|^2 \big| Y_j, i^*=j
\big) 
\leq  
\EE_{\gjonestartilde} \big(
\frac{\gjzerostartilde}{\gjonestartilde} - 1
\big)^2 =
\EE_{\gjonestartilde} \big(
\frac{\gjzerostartilde}{\gjonestartilde} - 2 + \frac{\gjonestartilde}{\gjzerostartilde}
\big).
\end{aligned}
$$
Let 
\begin{align*}
&L^* \coloneqq \frac{\gjonestartilde}{\gjzerostartilde} = \sum_{k\neq j} p_k^*(Y_j) \frac{f_j^*}{f_k^*}(Y_k) + p_j^*(Y_j)\\
&V^* \coloneqq \frac{n}{4} \gamma^* \min_k p_k^*(Y_j),\ \ \ \ \ \ \ \gamma^*\coloneqq \gamma \min_k F_k(Y_j), 
\end{align*}
where $\gamma$ is a constant as in \ref{assump1}. 
Note that, for $C$ as in \ref{assump1}, 
\begin{equation}\label{eq:obs1}
\begin{aligned}
\EE_{\theta_k} \bigg( \frac{(f_j^*(Y_k))^2}{(f_k^*(Y_k))^2} \bigg| Y_j, Y_k\leq Y_j \bigg) 
&= \frac{(F_k(Y_j))^2}{(F_j(Y_j))^2} \EE_{\theta_k} \bigg( \frac{(f_j(Y_k))^2}{(f_k(Y_k))^2} \bigg| Y_j, Y_k\leq Y_j \bigg)\\
&= \frac{(F_k(Y_j))^2}{(F_j(Y_j))^2} \int \frac{f_j^2}{f_k^2}(y) \frac{f_k(y)}{F_k(Y_j)}\bfone(y\leq Y_j) dy\\
&\leq \frac{1}{(\min_k F_k(Y_j))^3} \int \frac{f_j^2(y)}{f_k^2(y)}f_k(y) dy\\
&= \frac{1}{(\min_k F_k(Y_j))^3} \EE_{\theta_k} \bigg[ \frac{f^2_j(Y_k)}{f^2_k(Y_k)}\bigg] \leq C \frac{1}{(\min_k F_k(Y_j))^3}
\end{aligned}
\end{equation}
and that 
\begin{equation}\label{eq:obs2}
\begin{aligned}
&\PP_{\theta_k} \bigg( \frac{f_j^*(Y_k)}{f_k^*(Y_k)} >\gamma^* \bigg| Y_j, Y_k\leq Y_j \bigg) \\
&= \PP_{\theta_k} \bigg( \frac{f_j}{f_k} \geq \gamma^* \frac{F_j}{F_k}(Y_j) \bigg| Y_j, Y_k\leq Y_j \bigg) \\
&= \int \frac{f_k(y)}{F_k(y)}\bfone(y\leq Y_j) \bfone\bigg( \frac{f_j(y)}{f_k(y)} \geq  \gamma^* \frac{F_j}{F_k}(Y_j) \bigg) dy\\
&\geq \!\!  \int \!\! f_k(y) \bfone(y\leq Y_j) \bfone\bigg( \! \frac{f_j(y)}{f_k(y)} \geq  \gamma^* \frac{1}{ \min_k F_k(Y_j) }(Y_j) \! \bigg) dy\\
&\geq  \int \!\! f_k(y) \bfone\big( \frac{f_j}{f_k}(y) \geq \gamma \big) dy\\
&= \PP_{\theta_k} \big( \frac{f_j}{f_k}(Y_k) \geq \gamma \big) \geq 1/2. 
\end{aligned}
\end{equation}
Then
\begin{align}
\EE\big(
|W_j^*(\bYnotj, Y_j) - 1|^2 \big| Y_j, i^*=j
\big)
&\leq 
\EE_{\gjonestartilde} \big(
\frac{1}{L^*} - 2 + L^*
\big)=\EE_{\gjonestartilde} 
\frac{(L^*-1)^2}{L^*} \notag \\
&\leq \frac{1}{V^*} \EE_{\gjonestartilde} (L^*-1)^2 \bfone(V^*>L^*) + \EE_{\gjonestartilde}\frac{\bfone(V^*\leq L^*)}{L^*}. \label{eq:sum}
\end{align}
For the first term in the sum, using \eqref{eq:obs1}, 
\begin{equation}\label{eq:summand1}
\begin{aligned}
&\frac{1}{V^*} \EE_{\gjonestartilde} (L^*-1)^2 \bfone(V^*>L^*) 
\leq \frac{1}{V^*} \EE_{\gjonestartilde} (L^*-1)^2 \\
&=\textnormal{Var}_{\gjonestartilde}(L^*) 
=\textnormal{Var}_{\gjonestartilde} \bigg( \sum_{k\neq j} p_k^*(Y_j) \frac{f_j^*}{f_k^*}(Y_k) + p_j^*(Y_j) \bigg)\\
&= \sum_k (p_k^*(Y_j))^2 
\EE_{\theta_k} \bigg( \frac{(f_j^*(Y_k))^2}{(f_k^*(Y_k))^2} \bigg| Y_j, Y_k\leq Y_j \bigg)\\
&\leq C \frac{1}{(\min_k F_k(Y_j))^3} \sum_k (p_k^*(Y_j))^2. 
\end{aligned}
\end{equation}
For the second term in the sum in \eqref{eq:sum}, bound
$$
\begin{aligned}
L^* \geq \gamma^* \min_k p_k^*(Y_j) \sum_{k=1}^n \bfone \bigg( \frac{f_j^*}{f_k^*}(Y_k) \geq \gamma^* \bigg)  \succeq \gamma^* \min_k p_k^*(Y_j)(1+U)
\end{aligned}
$$
where $U\sim Bin(n-1,1/2)$. 
By the same exact calculation as in GR09, 
\begin{equation}\label{eq:summand2}
\begin{aligned}
\EE_{\gjonestartilde}\frac{\bfone(V^*\leq L^*)}{L^*} 
\leq O(e^{-n}) \frac{1}{\gamma^* n \min_k p_k^*(Y_j)}. 
\end{aligned}
\end{equation}
From \eqref{eq:sum}, \eqref{eq:summand1} and \eqref{eq:summand2}, 
\begin{align}\label{eq:bound}
\EE\big(
|W_j^*(\bYnotj, Y_j) - 1|^2 \big| Y_j, i^*=j
\big) \leq  C\frac{\sum_k (p_k^*(Y^*))^2}{(\min_k F_k(Y^*))^3}  + O(e^{-n}) \frac{1}{\gamma^* n \min_k p_k^*(Y^*)}. 
\end{align}
Now, we can rewrite 
\begin{align*}
p_k^*(Y_j) = \PP(\pi(j)=k | Y_j, i^*=j) 
&=\frac{\PP(\pi(j)=k) f_k(Y_j)\prod_{i\neq k}F_i(Y_j)}{
\sum_l \PP(\pi(j)=l) f_l(Y_j)\prod_{i\neq l}F_i(Y_j)
}\\
&\qquad\qquad = \frac{f_k(Y_j)}{\sum_l f_l(Y_j)} \frac{\sum_l f_l(Y_j)}{\sum_l \frac{F_k}{F_l}(Y_j)f_l(Y_j)}
= p_k(Y_j)v_k(Y_j)
\end{align*}
where
$$
v_k(Y_j) \coloneqq \frac{\sum_l f_l(Y_j)}{\sum_l \frac{F_k}{F_l}(Y_j)f_l(Y_j)}. 
$$
Noting that
$$
\begin{aligned}
\min_k p_k^*(Y_j) = \min_k p_k(Y_j) v_k(Y_j) \geq \min_k p_k(Y_j)  \min_k v_k(Y_j), 
\end{aligned}
$$
and observing that 
$$
\min_k F_k(Y_j)\leq v_k(Y_j) \leq (\min_k F_k(Y_j))^{-1}, 
$$
we have 
\begin{align*}
\frac{\sum_k (p_k^*(Y_j))^2}{V^*} &\leq 4 \frac{\sum_k (p_k^*(Y_j))^2(v_k(Y_j))^2}{\gamma \min_kF_k(Y_j) n \min_k p_k (Y_j) \min_k v_k(Y_j)} \\[1ex]
&\leq 
\frac{4}{(\min_kF_k(Y_j))^4}
\frac{\max_k v_k^2(Y_j)}{\min_k v_k^2(Y_j)}
\frac{\sum_k (p_k^*(Y_j))^2}{n\gamma \min_k p_k(Y_j)} \\[1ex]
&\leq 
\frac{4}{(\min_kF_k(Y_j))^4}
\frac{\sum_k (p_k^*(Y_j))^2}{n\gamma \min_k p_k(Y_j)}
\end{align*}
and
$$
\begin{aligned}
\frac{1}{\gamma^* n \min_k p_k^*(Y_j)}  &\leq \frac{1}{\gamma \min_k F_k(Y_j) \min_k p_k(Y_j)  \min_k v_k(Y_j)}\\[1ex]
&\quad \leq 
\frac{1}{(\min_k F_k(Y_j))^2}\frac{1}{\gamma n \min_k p_k(Y_j)}. 
\end{aligned}
$$
Then, noting that 
$$
\begin{aligned}
\EE\big(|W_j^*(\bYnotj, Y_j) - 1|^2 \big| Y_j, i^*=j\big) = \EE\big(|W_j^*(\bY_{-i^*}, Y^*) - 1|^2 \big| Y_{i^*}\big), 
\end{aligned}
$$ 
we can further bound \eqref{eq:bound} as 
\begin{equation}\label{eq:mean-cond-ystar}
\begin{aligned}
\EE\big(|W_j^*(\bY_{-i^*}, Y^*) - 1|^2 \big| Y^*\big)& \leq 
\frac{4C}{(\min_k F_k(Y^*))^7} \frac{\sum_k (p_k(Y^*))^2}{n\gamma \min_k p_k(Y^*)} \\
&\quad+ O(e^{-n}) \frac{1}{(\min_k F_k(Y^*))^2} \frac{1}{\gamma n \min_k p_k(Y^*)}. 
\end{aligned}
\end{equation}
Now take expectation (over $Y$) on both sides, and note that we can assume the fixed-$\btheta$ model, because the distribution of $Y^*$ is the same under \eqref{eq:model} and \eqref{eq:bayes}. 
For the first term on the right hand side of \eqref{eq:mean-cond-ystar}, invoking H\"{o}lder's inequality with $r>1$ as in (A2), we have that for $n$ large enough, 
$$
\begin{aligned}
\EE \frac{4C}{(\min_k F_k(Y^*))^7} \frac{\sum_k (p_k(Y^*))^2}{n\gamma \min_k p_k(Y^*)}
\leq
 \bigg[ \EE \bigg( \frac{4C}{(\min_k F_k(Y^*))^7} \bigg)^\frac{r}{r-1} \bigg]^{\frac{r-1}{r}}
&\bigg[ \EE \bigg( 
\frac{\sum_k (p_k(Y^*))^2}{n\gamma \min_k p_k(Y^*)}
\bigg)^r \bigg]^{1/r}\\[1.5ex]
&\qquad \leq \text{const}\cdot n^{-\frac{r-1}{r}}, 
\end{aligned}
$$
using the assumptions and the trivial fact
$$
\begin{aligned}
\EE \bigg( 
\frac{\sum_k (p_k(Y^*))^2}{n\gamma \min_k p_k(Y^*)}
\bigg)^r
\leq 
\sum_{i=1}^n
\EE \bigg( 
\frac{\sum_k (p_k(Y_i))^2}{n\gamma \min_k p_k(Y_i)}
\bigg)^r \leq n\cdot \frac{C}{n^r}. 
\end{aligned}
$$
The expectation of the second term on the right hand side of \eqref{eq:mean-cond-ystar} is $o(e^{-n})$, hence we conclude that for large enough $n$, 
$$
\EE\big(|W_j^*(\bY_{-i^*}, Y^*) - 1|^2 \big) \leq \text{const}\cdot n^{-\frac{r-1}{r}}. 
$$
Finally, 
\begin{align*}
\EE (\delta^S(\bY) - \delta^{PI}(\bY))^2 &= \EE \EE [(\delta^S(\bY) - \delta^{PI}(\bY))^2|Y^*]\\
&= \EE \EE [ (\sum_{j=1}^n \theta_j p_j^*(Y_j) \big( W_j^*(\bY_{-j}, Y_j) - 1 \big))^2|Y^* ]\\
&\leq \max_j|\theta_j|^2 \EE\sum_j p_j^*(Y^*) \EE\big(|W_j^*(\bY_{-i^*}, Y^*) - 1|^2 \big| Y^*\big)\\
&\leq \max_j|\theta_j|^2 \EE [\max_j \EE\big(|W_j^*(\bY_{-i^*}, Y^*) - 1|^2 \big| Y^*\big) \sum_j p_j^*(Y^*)]\\
&\leq \text{const} \cdot n^{-\frac{r-1}{r}}
\end{align*}
for $n$ large enough, where in the last inequality we used the fact that $\sum_jp_j^*(t)\equiv 1$. 

\end{proof}

\end{appendix}

\bibliographystyle{abbrvnat}
\bibliography{refs}

\end{document}